\newcommand{\C}{{\mathbb{C}}}
    \renewcommand{\c}{{\mathbf{c}}}
\newcommand{\B}{{\mathcal{B}}}
\newcommand{\Div}{{\text{Div}}}
    \renewcommand{\div}{{\text{div}}}
\newcommand{\F}{{\mathbb{F}}}
\newcommand{\FF}{{\mathcal{F}}}
\newcommand{\e}{{\mathbf{e}}}
\newcommand{\iP}{{\mathfrak{P}}}
\newcommand{\LP}{{\mathcal{P}}}
    \renewcommand{\mod}{\ \mathrm{mod}\ }
\newcommand{\Norm}{{\textnormal{Norm}}}
    \renewcommand{\O}{{\mathcal{O}}}
\newcommand{\Q}{{\mathbb{Q}}}
\newcommand{\R}{{\mathbb{R}}}
\newcommand{\rank}{{\text{\em rank\ }}}
\newcommand{\supp}{{\text{supp}}}
\newcommand{\vol}{{\text{vol}}}
    \newcommand{\bu}{{\mathbf{u}}}
    \newcommand{\bv}{{\mathbf{v}}}
\newcommand{\Z}{{\mathbb{Z}}}
\newcommand\norm[1]{\left\lVert#1\right\rVert}
\newcommand{\Magma}{{\sf Magma}}
\newtheorem{theorem}{Theorem}[section]
\newtheorem{lemma}[theorem]{Lemma}
\newtheorem{proposition}[theorem]{Proposition}
\theoremstyle{definition}
\theoremstyle{remark}
\newtheorem{remark}[theorem]{Remark}
\numberwithin{equation}{section}
\begin{document}




\title{Improvement on Asymptotic Density of Packing Families Derived from Multiplicative Lattices\thanks{The final publication is available at Elsevier via \DOI.}}


\author{Shantian Cheng\footnote{The author has been supported by NTU Research Scholarship at Nanyang Technological University.}\\\small Division of Mathematical Sciences,\\ \small
School of Physical and Mathematical Sciences,\\ \small
Nanyang Technological University,\\ \small
SPMS-MAS-04-20, 21 Nanyang Link,
637371 Singapore\\\small scheng002@e.ntu.edu.sg; chengshantian@gmail.com}
\date{}
\maketitle
\begin{abstract}
Let $\omega=(-1+\sqrt{-3})/2$. For any lattice $P\subseteq \Z^n$, $\LP=P+\omega P$ is a subgroup of $\O_K^n$, where $\O_K=\Z[\omega]\subseteq \C$. As $\C$ is naturally isomorphic to $\R^2$, $\LP$ can be regarded as a lattice in $\R^{2n}$. Let $P$ be a multiplicative lattice (principal lattice or congruence lattice) introduced by Rosenbloom and Tsfasman. We concatenate a family of special codes with $t_{\iP}^\ell\cdot(P+\omega P)$, where $t_{\iP}$ is the generator of a prime ideal $\iP$ of $\O_K$. Applying this concatenation to a family of principal lattices, we obtain a new family with asymptotic density exponent $\lambda\geqslant-1.26532182283$, which is better than $-1.87$ given by Rosenbloom and Tsfasman considering only principal lattice families.
For a new family based on congruence lattices, the result is $\lambda\geqslant -1.26532181404$, which is better than $-1.39$ by considering only congruence lattice families.

\noindent
{\bf Keywords:} Concatenation; multiplicative lattices; special number field; Gilbert-Varshamov bound

\noindent
{\bf MSC2010:} 52C17; 11R58; 94B65; 14H05

\end{abstract}



\section{Introduction}

Sphere packing is a classical problem on how to pack non-overlapping equal spheres densely in $\R^N$. Many methods and results from different disciplines, such as discrete geometry, combinatorics, number theory and coding theory, etc. have been involved in this problem. For a detailed survey on the development in this territory, the reader may refer to the book of Conway and Sloane \cite{sloane1999sphere}.

Sphere packing evolves into two concrete problems. One is how to construct packings of larger density than the record (e.g. \cite[Table 1.2-1.3]{sloane1999sphere}) in Euclidean spaces of specific dimension $N$. Another one is how to construct families of packings with dimension $N\rightarrow \infty$ such that the asymptotic density exponent has small absolute value.

Minkowski gave a nonconstructive bound that there exists one packing family $\FF$ such that the asymptotic density exponent $\lambda(\FF)\geqslant -1$ (See \cite[p.184]{cassels1997introduction}). However, it is a challenge to construct families with $\lambda(\FF)<\infty$ explicitly (such families are called {\em asymptotically good}). The known constructive bounds for families with polynomial or exponential construction complexity in terms of $N$ are listed in the book of Litsyn and Tsfasman \cite[p.628]{tsfasman1991algebraic}. To our best knowledge, they still remain the best so far.

One classical packing construction idea is to concatenate proper codes with special packings in $\Z^n$. This method may offer new packings denser than the original ones. There are five well-known constructions based on this idea, which are referred as Construction A,B,C(due to Leech and Sloane); D(due to Bos, Conway and Sloane); E(due to Barnes and Sloane). More details about these constructions can be found in \cite{sloane1999sphere,rush2004spherepacking,zong1999sphere}.

Particularly, in Construction C \cite[Chapter 5]{zong1999sphere}, the binary expansion of the coordinates in $\Z^n$ is considered. A point is a packing center if and only if the first $\ell$ coordinate arrays are codewords in $\ell$ certain binary codes respectively.
Subsequently, instead of packings in $\Z^n$, Xing \cite{xing2008dense} considered the packings in $\O_K^n$, where $\O_K$ denotes the ring of integers in number field $K=\Q(\sqrt{-3})$, and then replaced the binary expansion by $\iP$-adic expansion, where $\iP$ is a nonzero prime ideal of $\O_K$. He offered several packing constructions with the best-known densities in small dimensions and obtained an unconditional bound of asymptotic density exponent $\lambda\geqslant -1.2653$.

For the asymptotic density exponent, Xing \cite{xing2008dense} concatenated $\ell$ codes with a packing $\LP^{(N)}$ in $\O_K^N$ of fixed minimum Euclidean distance. When $N$ tends to $\infty$, the number of codes $\ell\rightarrow \infty$, and the family $\left\{\LP^{(N)}\right\}$ is not asymptotically good. However, the resulting packing family is asymptotically good.

In this paper, we further explore the concatenating method of Xing to obtain another method to construct asymptotically good packing families. Compared with Xing's construction, we employ asymptotically good packing families and concatenate finitely many codes to them. The number of codes remains finite though the dimension $N\rightarrow \infty$.

Explicitly, we apply the generalized concatenating method to Rosenbloom and Tsfasman's multiplicative lattices in function fields (see \cite{Multiplicative1990}), and we get two asymptotically good families with bounds $\lambda\geqslant -1.26532182282$ (principal lattice case) and $\lambda\geqslant -1.26532181404$ (congruence lattice case), while the bounds for multiplicative lattice families provided in \cite{Multiplicative1990} are $-1.87$ and $-1.39$ respectively. Hence our construction improves the asymptotic density of packing families derived from multiplicative lattices.

In Section \ref{preliminaries}, we recall some basic knowledge of sphere packing, coding theory and concatenation based on the number field $K=\Q(\sqrt{-3})$. In Section~\ref{sec:remark on Xing}, we give some remarks on Xing's construction in comparison with the basic concatenation with $\O_K^n$. The general description of our new construction comes in Section \ref{sec:our general method}, and as an application, we apply the new method on the multiplicative lattices in Section \ref{concatenation with multiplicative lattices}. In Section \ref{sec:comparison} and Section \ref{sec:conclusion}, we compare the results and conclude our contribution.

\section{Preliminaries}\label{preliminaries}

\subsection{Sphere Packing}
Let $\LP$ be the set of centers of packed spheres and $\B_N(R)$ be the set
\begin{eqnarray*}
  \left\{\left(a_1,\cdots,a_N\right)\in \R^N: \sqrt{a_1^2+\cdots+a_N^2}\leqslant R\right\}.
\end{eqnarray*}
As a sphere packing construction is uniquely determined by the arrangement of the sphere centers, we also use $\LP$ to denote the corresponding packing.

For a packing $\LP$, the radius of the equal packed spheres is $d_E(\LP)/2$, where $d_E(\LP)$ is the minimum Euclidean distance between two distinct points in $\LP$.
Then the density $\Delta(\LP)$ of packing $\LP$ is defined as
\begin{eqnarray*}
  \Delta(\LP)=\limsup_{R\rightarrow \infty}\dfrac{\left|\LP\cap \B_N(R)\right|\cdot \left(d_E\left(\LP\right)/2\right)^N\cdot V_N}{\vol\left(\B_N\left(R+d_E\left(\LP\right)/2\right)\right)},
\end{eqnarray*}
where $V_N$ is the volume of the unit sphere in $\R^N$, that is
\begin{eqnarray*}
  V_N=\begin{cases}
    \dfrac{\pi^{N/2}}{\left(N/2\right)!},&\text{if $N$ is even;}\\
   \dfrac{2^N\pi^{\left(N-1\right)/2}\left(\left(N-1\right)/2\right)!}{N!},& \text{if $N$ is odd.}
  \end{cases}
\end{eqnarray*}
The sphere packing problem is to construct packings obtaining large density $\Delta(\LP)$. Moreover, the center density $\delta(\LP)$ and density exponent $\lambda(\LP)$ are defined respectively as
\[\delta(\LP)=\dfrac{\Delta(\LP)}{V_N},\quad \lambda(\LP)=\dfrac 1N \log_2 \Delta(\LP).\]

If $\LP=L$ forms a lattice, the density of lattice packing $L$ can be simplified as
\begin{eqnarray*}
  \Delta(L)=\dfrac{\left(d_E(L)/2\right)^NV_N}{\det(L)},
\end{eqnarray*}
where $\det(L)$ is the determinant of $L$.

 When we explore the asymptotic behavior of a packing family $\FF=\left\{\LP^{(N)}\right\}$ as dimension $N$ tends to $\infty$, we consider the asymptotic density exponent of the family
\begin{eqnarray*}
  \lambda(\FF)=\limsup_{N\rightarrow \infty}\dfrac 1N \log_2 \Delta\left(\LP^{(N)}\right).
\end{eqnarray*}
Note that by Stirling formula, as $N\rightarrow \infty$, we have
\begin{eqnarray*}\label{Stirling}
  \log_2 V_N=-\dfrac N2\log_2\dfrac {N}{2\pi e}-\dfrac 12\log_2(N\pi)-\epsilon,
\end{eqnarray*}
where $0<\epsilon<(\log_2 e)/(6N)$.

\subsection{Coding theory}\label{coding theory}

We recall some notations and results in coding theory.

For a $q$-ary code $C$, let $n(C),M(C)$ and $d_H(C)$ denote the length, the size, and the minimum Hamming distance of $C$, respectively. Such code is usually referred to as an $\left(n(C),M(C),d_H(C)\right)$-code.
Moreover, the relative minimum distance $\varrho(C)$ and the rate $R(C)$ are defined respectively as
\begin{eqnarray*}
\varrho(C)=\frac{d_H(C)}{n(C)},\quad  R(C)=\frac{\log_q M(C)}{n(C)}.
\end{eqnarray*}

Let $U_q$ be the set of the ordered pair $(\varrho,R)\in\R^2$ for which there exists a family $\{C_i\}_{i=0}^\infty$ of $q$-ary codes with $n(C_i)$ increasingly goes to $\infty$ as $i$ tends to $\infty$ and
\begin{eqnarray*}
  \varrho=\lim_{i\rightarrow \infty}\varrho(C_i),\quad \text{and}\quad R=\lim_{i\rightarrow \infty}R(C_i).
\end{eqnarray*}
Here is a result on $U_q$:

\begin{proposition}[{\cite[Section 1.3.1]{tsfasman1991algebraic}} or {\cite[Proposition 3.1]{xing2008dense}}]
  There exists a continuous function $R_q(\varrho)$, $\varrho\in[0,1]$, such that
  \[U_q=\left\{(\varrho,R)\in \R^2:0\leqslant R\leqslant R_q(\varrho),\ 0\leqslant \varrho\leqslant 1 \right\}.\]
  Moreover, $R_q(0)=1$, $R_q(\varrho)=0$ for $\varrho\in\left[(q-1)/q,1\right]$, and $R_q(\varrho)$ decreases on the interval $[0,(q-1)/q]$.
\end{proposition}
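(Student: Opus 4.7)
The plan is to construct $R_q$ directly as the upper envelope of $U_q$ by setting $R_q(\varrho):=\sup\{R\ge 0:(\varrho,R)\in U_q\}$ (with the convention $\sup\emptyset=0$), and then to verify the listed assertions via standard manipulations of code families---shortening, puncturing, passing to subcodes, taking direct products, and appending coordinates that repeat a fixed symbol. The unifying principle is that each such operation moves $(\varrho(C_i),R(C_i))$ in a controlled way, so combining them lets one steer the limits to prescribed points of $\R^2$.

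The boundary values are almost immediate. For $R_q(0)=1$ I would take $C_i=\F_q^i$: the rate is $1$ and the relative minimum distance is $1/i\to 0$, while the reverse inequality $R(C)\le 1$ is trivial. For $R_q(\varrho)=0$ on $[(q-1)/q,1]$ I would invoke Plotkin's bound: any $q$-ary code of length $n$ with $d>\tfrac{q-1}{q}n$ has size at most $d/\bigl(d-\tfrac{q-1}{q}n\bigr)$, so if $\varrho(C_i)\to\varrho>(q-1)/q$ then $M(C_i)$ stays bounded and $R(C_i)\to 0$.

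To identify $U_q$ with the hypograph of $R_q$ and to obtain monotonicity on $[0,(q-1)/q]$, I would prove two closure properties: (i) if $(\varrho,R)\in U_q$ and $0\le R'\le R$, then $(\varrho,R')\in U_q$, by selecting subcodes of size $\lfloor q^{R'n(C_i)}\rfloor$ from each $C_i$, arranged so that the relative distance still tends to $\varrho$; and (ii) if $(\varrho,R)\in U_q$ and $0\le\varrho'\le\varrho$, then $(\varrho',R)\in U_q$, by a product-and-padding construction that drags the relative distance down to $\varrho'$ without asymptotic loss of rate. Property (i) yields $U_q=\{(\varrho,R):0\le R\le R_q(\varrho)\}$ and property (ii) gives the non-increasing property. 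Continuity then follows from showing that $U_q$ is closed in $\R^2$ via a diagonal extraction---given $(\varrho_j,R_j)\to(\varrho,R)$ with each $(\varrho_j,R_j)\in U_q$, pick one code $C_{j,i_j}$ from the $j$-th defining family with parameters within $1/j$ of $(\varrho_j,R_j)$ and length exceeding $j$---so that $R_q$ is upper semi-continuous; combined with non-increasingness, this forces continuity from both sides.

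I expect the leftward-closure step (ii) to be the hardest, because most standard operations on codes couple the changes in $R$ and $\varrho$: puncturing drops both, shortening tends to increase $\varrho$, and appending zero coordinates scales them down proportionally. A workable recipe is to pad each $C_i$ with a length-$m_i$ block consisting of all of $\F_q^{m_i}$ (which contributes rate $1$ and minimum distance $1$, thus lowering $\varrho$ while raising $R$) and then restore the correct rate by passing to a subcode; tuning $m_i$ and the subcode size so that the two limits land exactly on $(\varrho',R)$ is the technical heart of the proposition.
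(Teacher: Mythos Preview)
The paper does not prove this proposition; it merely cites it from Tsfasman--Vl\u{a}du\c{t} and Xing, so there is no ``paper's proof'' to compare against. Your outline follows the standard route in those references, and the overall architecture---define $R_q$ as the upper boundary of $U_q$, establish closure properties by spoiling codes, and read off the boundary values from trivial codes and the Plotkin bound---is the right one.

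There is, however, a genuine gap in your continuity step. The implication ``upper semi-continuous and non-increasing $\Rightarrow$ continuous'' is false: the indicator-type function $f(\varrho)=1$ for $\varrho\le \tfrac12$ and $f(\varrho)=0$ for $\varrho>\tfrac12$ is non-increasing with closed hypograph, yet has a jump. Closedness of $U_q$ together with monotonicity gives left-continuity only; to rule out a downward jump at some $\varrho_0$ you need a \emph{rightward} closure property, i.e.\ a way to push $\varrho$ slightly above $\varrho_0$ while losing only a controlled amount of rate. The usual device is shortening: deleting $\alpha n$ coordinates (restricting to codewords with a fixed value there) sends $(\varrho,R)$ to approximately $\bigl(\varrho/(1-\alpha),\,(R-\alpha)/(1-\alpha)\bigr)$, which as $\alpha\to 0$ gives $R_q(\varrho+\delta)\ge R_q(\varrho)-O(\delta)$ and hence right-continuity. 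Your plan never supplies such a move.

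A secondary issue is your construction for property~(ii). Taking the direct sum $C_i\oplus\F_q^{m_i}$ forces the minimum distance to $1$, so the relative distance of the new family tends to $0$, not to an arbitrary $\varrho'\in(0,\varrho)$; tuning $m_i$ cannot fix this. A cleaner route is puncturing: removing $\alpha n$ coordinates sends $(\varrho,R)$ toward $\bigl((\varrho-\alpha)/(1-\alpha),\,R/(1-\alpha)\bigr)$, lowering $\varrho$ and raising $R$, after which a subcode (containing a minimum-weight pair) restores the rate to $R$ at the new $\varrho'$.
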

For $0<\varrho<1$, the $q$-ary entropy function is given as \begin{eqnarray*}
  H_q(\varrho)=\varrho\log_q(q-1)-\varrho\log_q\varrho-(1-\varrho)\log_q(1-\varrho).
\end{eqnarray*}
The asymptotic Gilbert-Varshamov (GV) bound indicates that
\begin{eqnarray}\label{eqn:GV bound}
  R_q(\varrho)\geqslant R_{GV}(q,\varrho):=1-H_q(\varrho),\quad\text{for all }\varrho\in\left(0,\dfrac {q-1}{q}\right).
\end{eqnarray}
Moreover, for any given rate $R$, there exists a family of linear codes which meets the GV bound (see \cite[Section 17.7]{macwilliams1977theory}).


\subsection{Concatenation based on number field $K=\Q\left(\sqrt{-3}\right)$}

 The concatenation based on $\Q(\sqrt{-3})$ has been explained in \cite{xing2008dense}. We recall some key properties first.

Let $\omega=(-1+\sqrt{-3})/2$, $K=\Q(\sqrt{-3})$. The ring of integers of K is $\O_K=\Z[\omega]$. Via the mapping $\C\rightarrow \R^2$ as $a+bi\mapsto (a,b)$, we may identify a vector $\bu+\omega \bv\in \R^n+\omega\R^n$ in $\C^n$ with a vector $(\bu-\frac 12 \bv,\frac{\sqrt{3}}{2}\bv)$ in $\R^{2n}$. So $\O_K^n$ can be regarded as a subset of $\R^{2n}$. If we define the length $\norm{\c}$ of the complex vector $\c=(a_1+b_1i,\cdots,a_n+b_ni)$ $(a_i,b_i\in\R)$ as $\sqrt{\sum_{i=1}^n(a_i^2+b_i^2)}$, then it is obvious that $\norm{\bu+\omega\bv }=\norm{(\bu-\frac 12 \bv,\frac{\sqrt{3}}{2}\bv)}$, where the second one is the Euclidean length of the vector in $\R^{2n}$.

Let $P\subseteq\Z^n$ be a packing in $\R^n$. The minimum Euclidean distance, determinant of $P\subseteq\R^n$ and $\LP=P+\omega P\subseteq\R^{2n}$ have the following relations.

\begin{lemma}[{\cite[Proposition 2.2]{xing2008dense}}]\label{degree2distance}
The minimum Euclidean distance $$d_E(P+\omega P)=d_E(P).$$
\end{lemma}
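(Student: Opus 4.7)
The plan is to translate everything to $\R^{2n}$ via the identification given in the excerpt, derive a clean formula for the squared Euclidean norm of a generic element $\bu + \omega \bv$, and then bound $d_E(P+\omega P)$ by $d_E(P)$ from both sides.

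First, substituting $\omega = -\tfrac12 + \tfrac{\sqrt{3}}{2}\,i$ into $a+bi \mapsto (a,b)$ realises $\bu + \omega \bv$ as the vector $\bigl(\bu - \tfrac12\bv,\; \tfrac{\sqrt{3}}{2}\bv\bigr) \in \R^{2n}$, and a direct expansion yields
\[
\|\bu + \omega \bv\|^2 \;=\; \|\bu\|^2 - \bu\cdot\bv + \|\bv\|^2.
\]
Using the polarization identity $2\,\bu\cdot\bv = \|\bu\|^2 + \|\bv\|^2 - \|\bu-\bv\|^2$, I rewrite this in the symmetric form
\[
\|\bu + \omega \bv\|^2 \;=\; \tfrac12\bigl(\|\bu\|^2 + \|\bv\|^2 + \|\bu - \bv\|^2\bigr),
\]
which will do all the work below. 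Note also that $|\omega|=1$ (hence $\|\omega\bv\| = \|\bv\|$) and $|1+\omega|=1$.

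For the upper bound $d_E(P+\omega P)\leqslant d_E(P)$, I pick distinct $\p_1,\p_2\in P$ achieving $d_E(P)$ and fix any $\bv\in P$; the points $\p_1 + \omega\bv$ and $\p_2 + \omega\bv$ both lie in $P+\omega P$, and their difference is $\p_1-\p_2$, whose length in $\R^{2n}$ equals $\|\p_1-\p_2\|=d_E(P)$.

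For the lower bound $d_E(P+\omega P)\geqslant d_E(P)$, a generic nonzero difference of two elements of $P+\omega P$ has the form $\a + \omega\w$, where $\a$ and $\w$ are each differences of two points of $P$ and $(\a,\w)\neq(\0,\0)$. If exactly one of $\a,\w$ is zero, the length equals $\|\a\|$ or $\|\w\|$, each at least $d_E(P)$. If both are nonzero, the identity above gives
\[
\|\a+\omega\w\|^2 \;\geqslant\; \tfrac12\bigl(d_E(P)^2+d_E(P)^2+0\bigr) \;=\; d_E(P)^2,
\]
having simply dropped the nonnegative term $\|\a-\w\|^2$. Combining both bounds yields the claimed equality.

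The only subtle point is that one cannot lower-bound $\|\a-\w\|$ by $d_E(P)$: $\a-\w$ is a difference of two sums of points in $P$ and need not itself be a single difference of two points in $P$. Fortunately the identity uses $\|\a-\w\|^2$ only with a nonnegative coefficient, so we may just discard it. Tightness of the bound is then ensured by $|1+\omega|=1$, which makes the borderline case $\a=\w\neq\0$ contribute exactly $\|\a\|$ rather than something strictly larger.
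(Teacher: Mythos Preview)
Your proof is correct. The paper itself gives no proof of this lemma, simply citing it as \cite[Proposition~2.2]{xing2008dense}; your self-contained argument via the identity $\|\bu+\omega\bv\|^2=\tfrac12\bigl(\|\bu\|^2+\|\bv\|^2+\|\bu-\bv\|^2\bigr)$ handles the general packing case (where $P$ need not be a lattice) cleanly, and your closing remark correctly flags why one cannot assume $\|\a-\w\|\geqslant d_E(P)$.
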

\begin{lemma}[{\cite[Proposition 2.6(i)]{xing2008dense}}]\label{degree2discriminant}
The determinant $$\det(P+\omega P)=\left(\dfrac {\sqrt 3}{2}\right)^n\left(\det(P)\right)^2.$$
\end{lemma}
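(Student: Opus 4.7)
The plan is to compute the determinant directly by writing down an explicit $\mathbb{Z}$-basis for $\LP = P+\omega P$ and expressing it as a block matrix under the identification $\C \cong \R^2$ described in the paragraph preceding Lemma \ref{degree2distance}.

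First I would fix a $\Z$-basis $\mathbf{p}_1, \ldots, \mathbf{p}_n$ for $P$ and let $A$ denote the $n \times n$ matrix whose rows are these basis vectors, so that $\det(P) = |\det(A)|$. Since $\O_K = \Z \oplus \omega \Z$ as a free $\Z$-module of rank $2$, the set $\{\mathbf{p}_1, \ldots, \mathbf{p}_n, \omega \mathbf{p}_1, \ldots, \omega \mathbf{p}_n\}$ is a $\Z$-basis for $\LP$, viewed inside $\O_K^n$. This is the only genuinely non-automatic step: I need that $\{\mathbf{p}_i\}$ and $\{\omega \mathbf{p}_i\}$ together are $\Z$-linearly independent in $\C^n$, which is immediate because any relation would give a real combination of the $\mathbf{p}_i$'s equal to a real multiple of the imaginary part $\tfrac{\sqrt{3}}{2}$ times the same, forcing both coefficient vectors to vanish by the $\R$-independence of $\{\mathbf{p}_i\}$.

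Next I would translate these $2n$ basis vectors into $\R^{2n}$ using the given identification $\bu + \omega \bv \mapsto (\bu - \tfrac{1}{2}\bv, \tfrac{\sqrt{3}}{2}\bv)$. The vector $\mathbf{p}_i$ (corresponding to $\bu = \mathbf{p}_i$, $\bv = \0$) maps to $(\mathbf{p}_i, \0)$, while $\omega \mathbf{p}_i$ (corresponding to $\bu = \0$, $\bv = \mathbf{p}_i$) maps to $\bigl(-\tfrac{1}{2}\mathbf{p}_i, \tfrac{\sqrt{3}}{2}\mathbf{p}_i\bigr)$. Stacking these $2n$ rows gives the block lower-triangular generator matrix
\[
M \;=\; \begin{pmatrix} A & 0 \\ -\tfrac{1}{2}A & \tfrac{\sqrt{3}}{2}A \end{pmatrix}.
\]

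Finally, the determinant of a block triangular matrix is the product of the determinants of the diagonal blocks, so
\[
\det(\LP) \;=\; |\det M| \;=\; |\det A| \cdot \left|\det\!\left(\tfrac{\sqrt{3}}{2}A\right)\right| \;=\; \left(\tfrac{\sqrt{3}}{2}\right)^n \bigl(\det P\bigr)^2,
\]
which is the claim. There is really no conceptual obstacle beyond keeping the identification bookkeeping straight; the key insight is just that pairing the basis into $\{\mathbf{p}_i\}$ and $\{\omega \mathbf{p}_i\}$ and ordering the coordinates of $\R^{2n}$ as (real part, imaginary part) produces a matrix whose upper-right block is zero, collapsing the determinant into a product of two $n \times n$ determinants.
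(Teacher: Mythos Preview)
Your argument is correct. The paper itself does not supply a proof of this lemma; it simply quotes the result as \cite[Proposition 2.6(i)]{xing2008dense}, so there is nothing in the present paper to compare against. Your direct computation---choosing a $\Z$-basis $\mathbf{p}_1,\ldots,\mathbf{p}_n$ of $P$, observing that $\{\mathbf{p}_i\}\cup\{\omega\mathbf{p}_i\}$ is a $\Z$-basis of $\LP$, and reading off the block lower-triangular generator matrix under the identification $\bu+\omega\bv\mapsto(\bu-\tfrac12\bv,\tfrac{\sqrt3}{2}\bv)$---is the standard and natural way to establish the formula, and every step is sound.

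One small remark: your justification of $\Z$-linear independence of $\{\mathbf{p}_i,\omega\mathbf{p}_i\}$ is fine, but it is even cleaner to note that $\O_K=\Z\oplus\omega\Z$ implies $\O_K^n=\Z^n\oplus\omega\Z^n$ as abelian groups, so $P+\omega P=P\oplus\omega P$ automatically, and any $\Z$-basis of $P$ together with its $\omega$-multiples is a $\Z$-basis of the direct sum.
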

Here $K$ is a totally complex field and $\O_K$ is a principal ideal domain. Given a non-zero prime ideal $\iP=(t_{\iP})$ with absolute norm $Q:=N(\iP)=
\left|\Norm_{K/\Q}(t_{\iP})\right|$, we can consider a special packing $$t_{\iP}\cdot\LP:=\left\{(t_{\iP}\alpha_1,t_{\iP}\alpha_2,\cdots,t_{\iP}\alpha_n)\in\O_K^n:(\alpha_1,\alpha_2,\cdots,\alpha_n)\in\LP\right\}.$$

 From algebraic number theory (see \cite{neukirch1999algebraic}), we know that the residue class field $\F_{\iP}=\O_K/\iP$ is isomorphic to the finite field $\F_Q$. Let $\beta_1=0,\beta_2,\cdots,\beta_Q$ be $Q$ elements of $\O_K$ such that $$\beta_1\mod \iP,\cdots,\beta_Q\mod \iP$$ represent the $Q$ distinct elements in $\F_{\iP}$.
 In the following discussion, we take the alphabet set of $Q$-ary codes to be $S=\{\beta_1,\cdots,\beta_Q\}$. In this way, the codes can be regarded as a finite subset of $\O_K^n$.

We take a family of $Q$-ary codes $\left\{C_i=(n,M_i,\geqslant Q^{\ell-i}d_E^2(\LP)\right\}_{i=0}^{\ell-1}$. The following lemma offers the concatenating method of the codes with the packing $t_{\iP}^\ell\cdot \LP\subseteq\O_K^n$. Note that the concatenation is just the sumset of the subsets in $\O_K^n$.

\begin{lemma}[{\cite[Corollary 2.4]{xing2008dense}}]\label{concatenate}
  Given a non-zero prime ideal $\iP=(t_{\iP})$ of $K=\Q(\sqrt{-3})$ such that $Q=N(\iP)=
|\Norm_{K/\Q}(t_{\iP})|$, let
\begin{enumerate}[(i)]
\item $\LP\subseteq \O_K^n$ be a packing in $\R^{2n}$;
\item $\mathcal{C}=\left\{C_i=\left(n,M_i,d_{C_i}\right)\right\}_{i=0}^{\ell-1}$ be a family of $Q$-ary codes, where the alphabet set of $C_i$ is $S$, and $d_{C_i}\geqslant Q^{\ell-i}d_E^2(\LP)$. In addition, for each $0\leqslant i\leqslant \ell-1$, $C_i$ contains zero codeword.
\end{enumerate}
  Then the concatenation $C_0+t_{\iP}C_1+\cdots+t_{\iP}^{\ell-1}C_{\ell-1}+t_{\iP}^{\ell}\cdot\LP$ is a subset of $\O_K^n$, which is defined as
   \begin{eqnarray*}
     \left\{\sum_{i=0}^{\ell-1}t_{\iP}^i\mathbf{c}_i+t_{\iP}^\ell\mathbf{p}:\mathbf{c}_i\in C_i\ \text{for all $0\leqslant i\leqslant \ell-1$,}\ \mathbf{p}\in\LP \right\}.
   \end{eqnarray*}
   It can be regarded as a packing in $\R^{2n}$ with density at least $\Delta(\LP)\cdot\prod_{i=0}^{\ell-1}M_i $. Equivalently, the density exponent $$\lambda\left(C_0+t_{\iP}C_1+\cdots+t_{\iP}^{\ell-1}C_{\ell-1}+t_{\iP}^{\ell}\cdot\LP\right)\geqslant \lambda(\LP)+ \dfrac{1}{2n}\sum_{i=0}^{\ell-1}\log_2(M_i ).$$
\end{lemma}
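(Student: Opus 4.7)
The plan is to reduce the claim to two facts about the concatenation $\LP' := C_0+t_{\iP}C_1+\cdots+t_{\iP}^{\ell-1}C_{\ell-1}+t_{\iP}^{\ell}\LP$: (a) the minimum Euclidean distance satisfies $d_E(\LP')\geqslant Q^{\ell/2}\, d_E(\LP)$, and (b) the $\prod_{i=0}^{\ell-1}M_i$ translates $\c+t_\iP^\ell\LP$ of $t_\iP^\ell\LP$ that make up $\LP'$ (indexed by $\c = \sum_{i=0}^{\ell-1} t_\iP^i\c_i$) are pairwise disjoint. Because multiplication by $t_\iP$ on $\O_K^n\subseteq\R^{2n}$ is a similarity of ratio $|t_\iP|=Q^{1/2}$, the packing $t_\iP^\ell\LP$ has the same density as $\LP$, and once (a) and (b) are in place the density formula $\Delta(\cdot)=(\text{asymptotic point density})\cdot(d_E/2)^{2n}V_{2n}$ yields $\Delta(\LP')\geqslant\Delta(\LP)\prod_i M_i$; dividing by $V_{2n}$ and taking $\tfrac{1}{2n}\log_2$ then produces the stated bound on $\lambda$.

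For (a), I would take two distinct points $x=\sum_{i=0}^{\ell-1}t_\iP^i\c_i+t_\iP^\ell\p$ and $x'$ of $\LP'$, and let $k\in\{0,\dots,\ell\}$ be the smallest index at which the respective components differ, with the convention $\c_\ell:=\p$ and $\c_\ell':=\p'$. If $k=\ell$, then $x-x'=t_\iP^\ell(\p-\p')$ and $\|x-x'\|\geqslant Q^{\ell/2}d_E(\LP)$ directly. If $k<\ell$, pick any coordinate $j$ with $c_{k,j}\neq c'_{k,j}$: since $S=\{\beta_1,\dots,\beta_Q\}$ represents $\O_K/\iP$ bijectively, $c_{k,j}-c'_{k,j}\notin\iP$, so $x_j-x'_j=t_\iP^k u_j$ for some $u_j\in\O_K\setminus\iP$, and then $|x_j-x'_j|^2=Q^k|u_j|^2\geqslant Q^k$ because $|u_j|^2=|\Norm_{K/\Q}(u_j)|$ is a positive integer. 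Summing over the at least $d_{C_k}\geqslant Q^{\ell-k}d_E^2(\LP)$ differing coordinates gives $\|x-x'\|^2\geqslant Q^\ell d_E^2(\LP)$.

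The same $\iP$-adic valuation argument proves (b): if $(\c_0,\dots,\c_{\ell-1})\neq(\c'_0,\dots,\c'_{\ell-1})$, let $k$ be the first index of disagreement; then in any coordinate where $\c_k$ and $\c'_k$ differ, $(\c-\c')_j = t_\iP^k u_j$ with $u_j\notin\iP$, so $\c-\c'\notin t_\iP^\ell\O_K^n\supseteq t_\iP^\ell\LP$. Hence the $\prod M_i$ translates are disjoint in $\R^{2n}$. Each contributes asymptotic point density $\rho(t_\iP^\ell\LP)=\rho(\LP)/Q^{\ell n}$ (the shift vectors $\c$ have uniformly bounded norm, so they do not affect the limsup), giving $\rho(\LP')=(\prod M_i)\rho(\LP)/Q^{\ell n}$. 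Combining this with $(d_E(\LP')/2)^{2n}\geqslant Q^{\ell n}(d_E(\LP)/2)^{2n}$ in the density formula cancels the $Q^{\ell n}$ factor and leaves the desired inequality.

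The main obstacle is the sharpness of the valuation calculation in (a): one must exploit precisely that $S$ is a full system of residue representatives, so that coordinate-wise codeword differences at the lowest differing layer have $\iP$-adic valuation exactly $k$, and that nonzero elements of $\O_K$ have absolute norm at least $1$, so that the per-coordinate lower bound is $Q^k$ rather than a weaker quantity. Once this is secured, the disjointness in (b) is free, and the remaining density computation is routine asymptotic bookkeeping using that $t_\iP$ acts as a Euclidean similarity.
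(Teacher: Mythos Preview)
Your proof is correct and rests on the same key computation as the paper's (which defers details to Xing's Corollary~2.4): the $\iP$-adic valuation argument showing that at the lowest layer $k$ where two representations differ, each differing coordinate contributes at least $Q^k$ to the squared Euclidean distance, so that $\|x-x'\|^2\geqslant d_{C_k}\cdot Q^k\geqslant Q^\ell d_E^2(\LP)$. The only organizational difference is that the paper proceeds by induction on $\ell$ (establish the case $\ell=1$, then peel off one code at a time), whereas you give the direct unrolled version by jumping straight to the first index of disagreement; the two are logically equivalent.
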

\begin{proof}
  First consider the case $\ell=1$, which only concatenates one code $C$ with the packing $t_{\iP}\cdot\LP$. Then use induction to get the general result. For the details, readers may refer to \cite[Corollary 2.4]{xing2008dense}.
\end{proof}

Note that the requirement that each code concatenated contains zero codeword is necessary for Proposition 2.3 and Corollary 2.4 of \cite{xing2008dense} as the proof requires that any codeword in $C$ has Hamming weight not less than the minimum Hamming distance of $C$.


\section{Remarks on the Asymptotic Properties of Xing's Construction}\label{sec:remark on Xing}
Based on Lemma \ref{concatenate} (Xing's construction), a direct idea for constructing asymptotically good packing family is to take $\LP$ as $\O_K^n$ and let $n$ tend to $\infty$. The result somehow is not included in Xing's paper \cite{xing2008dense}. Here we exhibit it as a benchmark. Moreover, in order to highlight our innovation and contribution, we briefly recall Xing's asymptotically good packing family.

\subsection{Asymptotically Good Packing Family Derived from $\O_K^n$}
Based on the GV bound \eqref{eqn:GV bound}, for $0\leqslant i\leqslant\ell-1$, we can choose $Q$-ary codes \begin{eqnarray*}\label{def:asymptotic single code}
  C^{(\ell)}_{i}=\left(n_\ell,Q^{n_\ell R^{(\ell)}_i},Q^{\ell-i}\right),\quad \text{where $n_\ell=Q^\ell$},
\end{eqnarray*} such that
the rate $$R^{(\ell)}_i\geqslant R_{GV}\left(Q,\varrho^{(\ell)}_i\right)=1-H_Q\left(\varrho^{(\ell)}_i\right),$$where the relative minimum distance $$\varrho^{(\ell)}_i=\dfrac{Q^{\ell-i}}{n_\ell}=\dfrac {1} {Q^i}
.$$
\begin{proposition}\label{prop:ring of integers}

   Set the packing $\LP$ in Lemma \ref{concatenate} as $\O_K^{n_\ell}$, where $n_\ell=Q^\ell$. Then the asymptotic density exponent $\lambda(\mathcal{F})$ of the packing family
   $$\mathcal{F}=\left\{C_0^{(\ell)}+t_{\iP}C_1^{(\ell)}+\cdots+t_{\iP}^{\ell-1}C_{\ell-1}^{(\ell)}+t_{\iP}^{\ell}\cdot\O_K^{n_\ell}\right\}_{\ell\rightarrow \infty}$$   satisfies
  \begin{eqnarray}\label{eqn:asymptotic of O_K}
    \lambda(\mathcal{F})\geqslant -1+\frac 12\log_2 2\pi e-\frac 14\log_2 3-\frac 1 2\log_2 Q\cdot \sum_{i=0}^{\ell-1} H_Q'(1/Q^i),
  \end{eqnarray}
  where $H_Q'(\varrho)=H_Q(\varrho)$ for $0<\varrho<\dfrac {Q-1}{Q}$ and $H_Q'(\varrho)=1$ for $\dfrac{Q-1}{Q}\leqslant \varrho\leqslant 1$.
\end{proposition}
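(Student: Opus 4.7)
The plan is to combine an elementary density computation for $\O_K^{n_\ell}$ with the concatenation bound of Lemma \ref{concatenate}, applied to the GV codes $C_i^{(\ell)}$ introduced just before the proposition.

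First I compute $\lambda(\O_K^{n_\ell})$. Taking $P=\Z^{n_\ell}$ in Lemmas \ref{degree2distance} and \ref{degree2discriminant} yields $d_E(\O_K^{n_\ell})=1$ and $\det(\O_K^{n_\ell})=(\sqrt{3}/2)^{n_\ell}$, so the lattice density formula gives
$$\lambda(\O_K^{n_\ell}) = -\tfrac{1}{2} - \tfrac{1}{4}\log_2 3 + \tfrac{1}{2n_\ell}\log_2 V_{2n_\ell}.$$
Substituting the Stirling-type asymptotic for $\log_2 V_N$ quoted earlier and absorbing the $O(1/n_\ell)$ error yields
$$\lambda(\O_K^{n_\ell}) = -\tfrac{1}{2} + \tfrac{1}{2}\log_2(\pi e) - \tfrac{1}{4}\log_2 3 - \tfrac{1}{2}\log_2 n_\ell + o(1).$$

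Next I apply Lemma \ref{concatenate} with $\LP=\O_K^{n_\ell}$ and the codes $C_i^{(\ell)}$. Since $\log_2 M_i = n_\ell R_i^{(\ell)}\log_2 Q$, the concatenation contribution is
$$\tfrac{1}{2n_\ell}\sum_{i=0}^{\ell-1}\log_2 M_i = \tfrac{\log_2 Q}{2}\sum_{i=0}^{\ell-1}R_i^{(\ell)} \geqslant \tfrac{\log_2 Q}{2}\sum_{i=0}^{\ell-1}\bigl(1-H_Q'(1/Q^i)\bigr),$$
where using $H_Q'$ in place of $H_Q$ handles the boundary indices (in particular $i=0$) for which $1/Q^i \geqslant (Q-1)/Q$ and the GV bound is vacuous; in those cases one simply takes the trivial code $\{0\}$, consistent with $1-H_Q'=0$.

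Summing the two bounds, the crucial observation is that $\tfrac{\ell}{2}\log_2 Q = \tfrac{1}{2}\log_2 n_\ell$ cancels exactly the diverging term $-\tfrac{1}{2}\log_2 n_\ell$ coming from Stirling. Using the identity $-\tfrac{1}{2}+\tfrac{1}{2}\log_2(\pi e) = -1+\tfrac{1}{2}\log_2(2\pi e)$ to collect constants then produces the claimed estimate \eqref{eqn:asymptotic of O_K}. The only real obstacle is confirming this cancellation: the logarithmic shrinkage of $V_{2n_\ell}$ with dimension is \emph{exactly} matched by the logarithmic growth of the total rate of the GV codes, which is precisely why the concatenation produces an asymptotically good family despite $\{\O_K^{n_\ell}\}$ itself not being so. Once the Stirling bookkeeping is verified, the rest is routine algebra.
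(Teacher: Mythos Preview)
Your proposal is correct and follows essentially the same route as the paper: compute $d_E(\O_K^{n_\ell})$ and $\det(\O_K^{n_\ell})$ via Lemmas~\ref{degree2distance}--\ref{degree2discriminant}, evaluate $\lambda(\O_K^{n_\ell})$ with Stirling, and then apply Lemma~\ref{concatenate} so that the $-\tfrac{1}{2}\log_2 n_\ell = -\tfrac{\ell}{2}\log_2 Q$ term is absorbed by the $\ell$ summands from the codes. Your explicit handling of the boundary case $i=0$ via $H_Q'$ and the trivial code is a nice clarification that the paper leaves implicit.
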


\begin{proof}
   From Lemma \ref{degree2distance} and Lemma \ref{degree2discriminant}, we know for each $n\in \Z_{\geqslant 1}$,
   \begin{eqnarray*}
     d_E(\O_K^n)=1,\quad \text{and} \quad \det(\O_K^n)=\left(\dfrac{\sqrt{3}}{2}\right)^n.
   \end{eqnarray*} Hence for $n=Q^\ell$, the density exponent of $\O_K^n$ satisfies
  \begin{eqnarray*}
    \lambda(\O_K^n)=\dfrac{1}{2n}\log_2\dfrac{\left(1/2\right)^{2n}V_{2n}}{\left(\sqrt{3}/2\right)^n}
     =-1+\frac 12\log_2 2\pi e-\frac 14\log_2 3-\frac \ell 2\log_2 Q .
  \end{eqnarray*}
  From Lemma \ref{concatenate}, we get \eqref{eqn:asymptotic of O_K}.

\end{proof}
\begin{remark}\label{remark:ring of integers}
There is no clear monotonicity of the lower bound \eqref{eqn:asymptotic of O_K}. We apply software \Magma{} V2.20-7 \cite{MR1484478,MS} to list all prime numbers within $100$. Let $p$ run through the list and choose one splitting prime ideal of $p$ as $\iP$. $Q$ is the norm of $\iP$. Set $\ell=1000$, which is sufficiently large to approximate the limit on the level of \Magma{} precision. The best result is
\begin{verbatim}
[Ring of integers in K]  -1.27196767512213615952191570262
  when Q=4 norm of prime ideal lying over 2.
  \end{verbatim}
  On the whole, the result will go worse when the prime number $p$ increases.
\end{remark}

\subsection{Xing's Asymptotically Good Packing Family}
Xing offered one method (Theorem 3.4 of \cite{xing2008dense}) to improve the asymptotic bound in Remark \ref{remark:ring of integers}. We retest the asymptotic density exponent of Xing's construction first.

\begin{remark}\label{remark:result of Xing}
In our experiment, we test the prime numbers within $50$, run through $z=1/Q$ to $(Q-1)/Q$ by $1/10000$. The best result of Xing's construction is
\begin{verbatim}
[Xing] -1.26532181415209410650824899158
 when z=3049/10000, Q= 4 norm of prime ideal lying over 2.
\end{verbatim}
\end{remark}
Note that $z\approx0.3049$ is the computational optimal estimate. Suppose the real optimal is $z_0$. We briefly sketch Xing's construction then.

 Instead of $\O_K^n$, Xing considered the packing $\LP_x\subseteq \O_K^n$ such that $d_E(\LP_x)$ is a integer $x$. Set $$\mathcal{F}_x=\left\{C_0^{(x)}+t_{\iP}C_1^{(x)}+\cdots+t_{\iP}^{\ell-1}C_{\ell-1}^{(x)}+t_{\iP}^{\ell}\cdot\LP_x\right\}_{n\rightarrow \infty},$$ where $\ell=\left\lfloor\log_Q(n/x)\right\rfloor$. One lower bound of its asymptotic density exponent is given in Theorem 3.2 of \cite{xing2008dense}.

If there exist an integer $x$ such that exactly $\dfrac{x}{Q^{\left\lceil\log_q x\right\rceil}}=z_0$, then the packing family $\mathcal{F}_x$ can obtain the optimal bound of \cite[Theorem 3.4]{xing2008dense}. Otherwise, we can select a sequence of integers $\left\{x_k\right\}$ such that $\lim_{k\rightarrow \infty}\dfrac{x_k}{Q^{\left\lceil\log_q x_k\right\rceil}}=z_0$, and then use diagonal argument to group a new family $\mathcal{F}'$ from $\{\mathcal{F}_{x_k}\}_{k\rightarrow \infty}$, where the $k$-th member of $\mathcal{F}'$ is the $k$-th member of $\mathcal{F}_{x_k}$. Then the new family $\mathcal{F}'$ can obtain the optimal bound of \cite[Theorem 3.4]{xing2008dense}.


\section{New Method to Construct Asymptotically Good Family}\label{sec:Our concatenation}\label{sec:our general method}
 In Xing's construction, the number of codes increases to $\infty$ as $n$ tends to $\infty$. He concatenated these codes to certain families of packings, which are not asymptotically good. In this paper, we exhibit a new constructing method that we concatenate finitely many codes to asymptotically good packing families. In particular, our method can obtain some packing families which are derived from, but denser than, the multiplicative lattice packing families. The results will be explicitly shown in next section.

Suppose we have an asymptotically good lattice packing family $\mathcal{F}=\left\{{L}_n\right\}_{n\rightarrow\infty }$ in $\R^n$ with $d_E(L_n)\geqslant c\sqrt{n}$ for some constant $c>0$.

Let $Q$ be the norm of one prime ideal $(t_{\iP})$. Set $\ell=\left\lfloor \log_Q \dfrac{(Q-1)}{c^2Q}\right\rfloor$. Thus $Q^\ell\cdot c^2\leqslant \dfrac{Q-1}{Q}$. Based on the GV bound \eqref{eqn:GV bound}, for $0\leqslant i\leqslant \ell-1$, we can choose $Q$-ary codes $$C_i^{(n)}=\left(n,Q^{nR_i^{(n)}},\left\lceil Q^{\ell-i}\cdot c^2n \right\rceil\right)$$ such that the rate
\begin{eqnarray*}
   R_i^{(n)}\geqslant R_{GV}\left(Q,\varrho_i^{(n)}\right)=1-H_Q\left(\varrho_i^{(n)}\right),
\end{eqnarray*}
where the relative minimum distance
\begin{eqnarray*}
  \lim_{n\rightarrow\infty} \varrho_i^{(n)}= Q^{\ell-i}\cdot c^2.
\end{eqnarray*}

\begin{proposition}\label{prop:general exponent}
  We can concatenate $\ell=\left\lfloor\log_Q\dfrac {Q-1}{c^2Q}\right\rfloor$ $Q$-ary codes $$\left\{C_i^{(n)}=\left(n,Q^{nR_i^{(n)}},\left\lceil Q^{\ell-i}\cdot c^2n \right\rceil\right)\right\}_{i=0}^{\ell-1}$$ to $\LP_n:=L_n+\omega L_n$. The asymptotic density exponent of the new packing family
  $$\mathcal{H}=\left\{C_0^{(n)}+t_{\iP}C_1^{(n)}+\cdots+t_{\iP}^{\ell-1}C_{\ell-1}^{(n)}+t_{\iP}^{\ell}\cdot \LP_n\right\}$$
  satisfies
  \begin{eqnarray*}
    \lambda(\mathcal{H})&\geqslant& \dfrac 12\log_2\dfrac{c^2\pi e}{2\sqrt{3}}-\dfrac {1}{n} \log_2 \det(L_n)+\dfrac 12 \log_2 Q \sum_{i=0}^{\ell-1}\left(1-H_Q\left(Q^{\ell-i} c^2\right)\right).
  \end{eqnarray*}
\end{proposition}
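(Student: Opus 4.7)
Apply Lemma~\ref{concatenate} to $\LP_n := L_n + \omega L_n$, then lower-bound the resulting density exponent by splitting it into the intrinsic contribution of $\LP_n$ (as a lattice in $\R^{2n}$) and the rate gain from the $\ell$ concatenated codes. Because $\ell = \lfloor \log_Q \frac{Q-1}{c^2 Q}\rfloor$ is independent of $n$, the sum over $i$ stays finite, and this particular choice is calibrated so that $Q^{\ell-i} c^2 \leqslant (Q-1)/Q$ for every $0 \leqslant i \leqslant \ell-1$, placing the limiting relative distances inside the interval $(0,(Q-1)/Q)$ where the GV bound \eqref{eqn:GV bound} is in force.

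First, I would verify the hypotheses of Lemma~\ref{concatenate}. Each $C_i^{(n)}$ can be chosen linear (so that $\mathbf{0} \in C_i^{(n)}$) and meeting the GV bound. Lemma~\ref{degree2distance} yields $d_E^2(\LP_n) = d_E^2(L_n) \geqslant c^2 n$, so the prescription $d_{C_i^{(n)}} = \lceil Q^{\ell-i} c^2 n \rceil$ supplies enough distance once it is slightly enlarged to $\lceil Q^{\ell-i} d_E^2(L_n)\rceil$; this replacement is cosmetic, since the corresponding relative distance $\varrho_i^{(n)} = Q^{\ell-i} d_E^2(L_n)/n$ still tends to $Q^{\ell-i} c^2$ and $H_Q$ is continuous, so the GV-based rate bound survives passage to the limit.

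Second, I would compute $\lambda(\LP_n)$. Lemmas~\ref{degree2distance}--\ref{degree2discriminant} give $d_E(\LP_n) \geqslant c\sqrt{n}$ and $\det(\LP_n) = (\sqrt{3}/2)^n (\det L_n)^2$. Substituting into the closed-form lattice density,
\[\lambda(\LP_n) \geqslant \frac{1}{2n}\log_2 \frac{(c\sqrt{n}/2)^{2n} V_{2n}}{(\sqrt{3}/2)^n (\det L_n)^2},\]
and inserting the Stirling estimate $\log_2 V_{2n} = -n \log_2 \frac{n}{\pi e} - \frac{1}{2}\log_2(2\pi n) + o(1)$ makes the $\log_2 n$ terms cancel; the surviving constants assemble into $\frac{1}{2}\log_2 \frac{c^2 \pi e}{2\sqrt{3}} - \frac{1}{n}\log_2 \det(L_n) + o(1)$, which is exactly the first two terms of the claimed bound.

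Third, for the code contribution, Lemma~\ref{concatenate} yields
\[\lambda(\mathcal{H}_n) \geqslant \lambda(\LP_n) + \frac{1}{2n}\sum_{i=0}^{\ell-1}\log_2 M_i^{(n)} \geqslant \lambda(\LP_n) + \frac{\log_2 Q}{2}\sum_{i=0}^{\ell-1}\bigl(1 - H_Q(\varrho_i^{(n)})\bigr),\]
since $M_i^{(n)} = Q^{n R_i^{(n)}}$ with $R_i^{(n)} \geqslant 1 - H_Q(\varrho_i^{(n)})$. Letting $n \to \infty$ so that $\varrho_i^{(n)} \to Q^{\ell-i} c^2$, continuity of $H_Q$ on $(0,(Q-1)/Q)$ delivers the third term and completes the bound. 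The main technical nuisance, rather than a real obstacle, is the per-$n$ reconciliation of Lemma~\ref{concatenate}'s distance hypothesis with the single uniform constant $c$; the rescaling of the target distance from $c^2 n$ to $d_E^2(L_n)$ noted in the first step handles this without altering the limit.
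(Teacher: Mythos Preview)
Your proposal is correct and follows essentially the same route as the paper: invoke Lemma~\ref{concatenate} so that the density exponent of the concatenation splits as $\lambda(\LP_n)$ plus the code-rate term $\tfrac{1}{2}\log_2 Q\sum_i R_i^{(n)}$, compute $\lambda(\LP_n)$ via Lemmas~\ref{degree2distance}--\ref{degree2discriminant} together with Stirling (the $\log_2 n$ terms cancel), and then pass to the limit using the GV bound and continuity of $H_Q$. The paper's proof merely compresses these steps into a single displayed chain of inequalities rather than separating them as you do, and it silently uses $c\sqrt{n}$ in place of $d_E(L_n)$ without flagging the reconciliation issue you noted; your observation that one may replace $\lceil Q^{\ell-i}c^2 n\rceil$ by $\lceil Q^{\ell-i}d_E^2(L_n)\rceil$ without affecting the limit is a genuine (if minor) clarification of the paper's argument.
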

 \begin{proof} From the definition of asymptotic density exponent, we have
    \begin{eqnarray*}
      \lambda(\mathcal{H})&\geqslant&\limsup_{n\rightarrow \infty}\dfrac{1}{2n}\log_2\dfrac{\left(c\sqrt{n}\right)^{2n}V_{2n}\prod_{i=0}^{\ell-1}Q^{nR_i^{(n)}}}{2^{2n}\det(\LP_n)}\\
      &=&\limsup_{n\rightarrow \infty}\log_2\dfrac c2+\dfrac 12 \log_2n-\dfrac 12 \log_2 n+\dfrac 12\log_2\pi e\\
      &&\qquad \qquad -\frac12 \log_2\frac{\sqrt{3}}{2}-\dfrac {1}{n} \log_2 \det(L_n)+\dfrac 12 \log_2Q\sum_{i=0}^{\ell-1}R_i^{(n)}\\
      &\geqslant&\dfrac 12\log_2\dfrac{c^2\pi e}{2\sqrt{3}}-\dfrac {1}{n} \log_2 \det(L_n)+\dfrac 12 \log_2 Q \sum_{i=0}^{\ell-1}\left(1-H_Q\left(Q^{\ell-i} c^2\right)\right).
    \end{eqnarray*}
 \end{proof}

\section{Concatenation with Multiplicative Lattices}\label{concatenation with multiplicative lattices}
Rosenbloom and Tsfasman \cite{Multiplicative1990} introduced two kinds of multiplicative lattices in global fields, that is, principal lattices and congruence lattices. In this paper, we only use the ones in function fields, where both of principal and congruence lattices are full rank sublattices of $A_{n-1}=\{\mathbf{x}\in \Z^n|\ \sum x_i=0\}$. They lead to asymptotically good packing families. In this section, we proceed with our new concatenating method introduced in Section \ref{sec:our general method} to improve the asymptotic density exponent derived from multiplicative lattice packings.
\subsection{Principal Lattices and Congruence Lattices}

We recall the definition of principal lattices from \cite{Multiplicative1990} first. Let $k=\F_q$ and $K=k(X)$, where $X/k$ be a smooth proper curve of genus $g$. Take a nonempty set $S=\{P_1,P_2,\cdots,P_n\}\subseteq X(k)$, $n=|S|$, and let
\[U_S=\left\{f\in K^* |\ f\ \text{is a unit outside}\ S\right\}.\]
Let $\Div _S(X)$ be the group of divisors supported in $S$, $\Div^0_S(X)\subseteq \Div_S(X)$ the subgroup of degree zero divisors, $\Pr_S(X)$ the subgroup of principal divisors, and let $J_X=\Div^0(X)/\Pr(X)$ denote the Jacobian of $X$. The properties  of these groups can be found in \cite[Chapter 1]{stichtenoth2009algebraic}.

There is a natural map
\begin{eqnarray*}
  \phi: U_S&\rightarrow& \Div_S(X)\simeq \Z^n\\
   f&\mapsto&\div(f),
\end{eqnarray*}
where $\div (f)$ is the principal divisor of $f$.
The principal lattice is defined as $L_S:=\Pr_S(X)=\phi(U_S)$, which is a sublattice of $A_{n-1}$. The parameters of $L_S$ are

\begin{lemma}[{\cite[Lemma 1.1]{Multiplicative1990}}]\label{parameter of principle lattice}
\begin{enumerate}[(i)]
\item  $\rank L_S=n-1$;
\item  $\det L_S\leqslant \sqrt{n}\cdot |J_X(k)|$;
\item  $d_E(L_S)\geqslant \min _{f\in U_S\setminus k^*}\sqrt{2\cdot \deg f}$.
\end{enumerate}
\end{lemma}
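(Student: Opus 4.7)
The plan is to identify $L_S$ as a sublattice of $A_{n-1} \subseteq \Z^n$ via the divisor map $\phi: U_S \to \Div_S(X) \cong \Z^n$, $f \mapsto \div(f) = \sum_{i=1}^n v_{P_i}(f) P_i$. Since principal divisors have degree zero, $L_S = \phi(U_S) \subseteq \Div^0_S(X) = A_{n-1}$, which, as a full-rank sublattice of the hyperplane $\{x \in \R^n : \sum x_i = 0\}$, has rank $n-1$ and determinant $\sqrt{n}$ (the latter computed from the Gram matrix of the basis $P_i - P_n$, $1 \leqslant i \leqslant n-1$).

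For parts (i) and (ii), I would exploit the natural injection $\Div^0_S(X)/L_S \hookrightarrow J_X(k)$ obtained by sending a degree zero divisor supported on $S$ to its class in the Jacobian; this map is well-defined on cosets precisely because its kernel equals $\Pr_S(X) = L_S$. Since $k = \F_q$ is finite, $J_X(k)$ is a finite abelian group (classical Weil bound), so $[A_{n-1} : L_S] \leqslant |J_X(k)| < \infty$. This immediately gives $\rank L_S = \rank A_{n-1} = n-1$, proving (i), and then (ii) follows from the lattice formula $\det L_S = [A_{n-1} : L_S] \cdot \det A_{n-1} \leqslant |J_X(k)| \cdot \sqrt{n}$.

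For (iii), fix any $f \in U_S \setminus k^*$ and write $\div(f) = \sum_{i=1}^n v_i P_i$ with $v_i \in \Z$ and $\sum v_i = 0$. Since $f$ has zeros and poles only on $S$, the degree $\deg f$ of $f$ as a rational function equals the degree of its pole divisor, i.e.\ $\deg f = \sum_{v_i > 0} v_i = -\sum_{v_i < 0} v_i$, so that $\sum_{i=1}^n |v_i| = 2 \deg f$. Because each $v_i$ is an integer, $v_i^2 \geqslant |v_i|$, whence $\norm{\div(f)}^2 = \sum v_i^2 \geqslant \sum |v_i| = 2 \deg f$. Taking the minimum over $f \in U_S \setminus k^*$ yields the bound in (iii).

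The main obstacle is conceptual rather than computational: one must correctly assemble the short exact structure $0 \to L_S \to \Div^0_S(X) \to J_X(k)$ and invoke the finiteness of $J_X(\F_q)$ to control the rank and the index of $L_S$ in $A_{n-1}$ simultaneously. Once this is set up, (i) and (ii) collapse into a single index calculation, and (iii) reduces to the elementary observation that integer squares dominate integer absolute values.
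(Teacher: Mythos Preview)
The paper does not supply its own proof of this lemma; it is quoted verbatim from Rosenbloom and Tsfasman \cite{Multiplicative1990}, Lemma~1.1, and used as a black box. Your argument is correct and is essentially the standard one: the injection $\Div^0_S(X)/L_S \hookrightarrow J_X(k)$ into a finite group forces finite index in $A_{n-1}$, which simultaneously yields (i) and, via $\det L_S = [A_{n-1}:L_S]\cdot\det A_{n-1}$ with $\det A_{n-1}=\sqrt{n}$, yields (ii); part (iii) follows from $v^2\geqslant |v|$ for $v\in\Z$ applied to the coefficients of $\div(f)$.
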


Furthermore, let $D$ be a positive divisor on $X$, $D=\sum a_iQ_i$, $r_i=\deg Q_i$, $N(Q_i)=q^{r_i}$, $a=\deg D=\sum a_ir_i$. Here we assume $S\cap \supp(D)=\emptyset$.
Then the congruence lattice is defined as $L_{S,D}:=\phi(U_{S,D})$, where
\[U_{S,D}=\left\{f\in U_S: f\equiv 1\mod D\right\}.\]

The parameters of $L_{S,D}$ are

\begin{lemma}[{\cite[Lemma 2.2]{Multiplicative1990}}]
\begin{enumerate}[(i)]
\item $\rank L_{S,D}=n-1$;
\item $\det L_{S,D}\leqslant \sqrt{n}\cdot|J_X(k)|\cdot\dfrac{q^a}{q-1}\cdot\prod(1-q^{-r_i})$;
\item $d_E(L_{S,D})\geqslant \sqrt{2a}$.
\end{enumerate}
\end{lemma}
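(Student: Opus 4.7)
The plan is to mirror the proof of Lemma~\ref{parameter of principle lattice}, realising $L_{S,D}$ as a finite-index sublattice of $L_S$ and reducing (i) and (ii) to an index calculation while (iii) reduces to a valuation estimate on $f-1$. The new ingredient, relative to the principal-lattice case, is a reduction-modulo-$D$ homomorphism made available by the hypothesis $S\cap\supp(D)=\emptyset$, which ensures every $f\in U_S$ is a unit at each $Q_i$.

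For parts (i) and (ii), I would analyze the reduction map
\[
\psi\colon U_S \longrightarrow \prod_{i}(\O_{X,Q_i}/Q_i^{a_i})^*
\]
whose kernel is exactly $U_{S,D}$ and whose target has order $q^a\prod_i(1-q^{-r_i})$, yielding a bound on $[U_S:U_{S,D}]$. To pass from unit groups to lattices, I would use the short exact sequence
\[
1\longrightarrow k^*U_{S,D}/U_{S,D}\longrightarrow U_S/U_{S,D}\longrightarrow L_S/L_{S,D}\longrightarrow 1
\]
together with the observation that $k^*\cap U_{S,D}=\{1\}$ (a nonzero constant $c-1$ cannot acquire any zero at $Q_i$, so $c\equiv 1\bmod D$ forces $c=1$), which identifies the leftmost term with $k^*$ and contributes a factor of $q-1$. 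Hence $[L_S:L_{S,D}]\leqslant\tfrac{q^a}{q-1}\prod_i(1-q^{-r_i})$. Part (i) then follows because a finite-index sublattice has the same rank, and (ii) follows from $\det L_{S,D}=[L_S:L_{S,D}]\cdot\det L_S$ combined with Lemma~\ref{parameter of principle lattice}(ii).

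For part (iii) I would adapt the norm estimate used for $L_S$. Given any $f\in U_{S,D}\setminus\{1\}$, the hypothesis gives $v_{Q_i}(f-1)\geqslant a_i$, hence $\deg(f-1)_0\geqslant\deg D=a$. The crucial observation is that at every pole $P$ of $f$ the ultrametric inequality forces $v_P(f-1)=v_P(f)$, so $f$ and $f-1$ share their pole divisors; this gives $\deg f=\deg(f-1)_\infty=\deg(f-1)_0\geqslant a$. Combined with the standard estimate $\|\div f\|^2=\sum_i v_{P_i}(f)^2\geqslant\sum_i|v_{P_i}(f)|=2\deg f$ (since each nonzero integer $v$ satisfies $v^2\geqslant|v|$), this yields $d_E(L_{S,D})\geqslant\sqrt{2a}$.

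The only place requiring real care is the index bookkeeping in (ii), where one must correctly identify the kernel of the surjection $U_S/U_{S,D}\to L_S/L_{S,D}$ as the image of $k^*$, so that the factor $q-1$ appears exactly once in the denominator; every other step is a direct adaptation of the principal-lattice proof.
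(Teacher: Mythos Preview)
The paper does not supply its own proof of this lemma: it is quoted verbatim from \cite[Lemma~2.2]{Multiplicative1990} and used as a black box, so there is nothing in the present paper to compare your argument against. That said, your proposal is correct and is essentially the argument one finds in the original Rosenbloom--Tsfasman paper: the index computation via the reduction map $U_S\to\prod_i(\O_{X,Q_i}/Q_i^{a_i})^*$, the bookkeeping identifying the contribution of $k^*$ (your observation $k^*\cap U_{S,D}=\{1\}$ is exactly what forces the single factor $q-1$ in the denominator), and for (iii) the pole-matching step $(f)_\infty=(f-1)_\infty$ combined with the elementary inequality $\sum v_{P_i}(f)^2\geqslant\sum|v_{P_i}(f)|=2\deg f$.

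Two small points worth tightening in a final write-up: in (iii) you should note explicitly that the contribution of the $Q_i$ to $(f-1)_0$ is at least $\sum_i a_i\deg Q_i=\sum_i a_i r_i=a$ (you use $\deg Q_i=r_i$, not $1$), and in the constant check you might phrase it as ``$c\neq 1$ implies $c-1\in k^*$, hence $v_{Q_i}(c-1)=0<a_i$'' rather than speaking of a constant ``acquiring a zero''. Neither affects the validity of the argument.
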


\subsection{Lattice Dimension Augmentation for Full Rank Sublattices of $A_{n-1}$}\label{Dimension augment}

We know the rank of $A_{n-1}$ is $n-1$. Now we want to apply our concatenating method on certain full rank sublattices of $A_{n-1}\subseteq \Z^{n}$. First we need introduce a dimension augmentation method to make the lattices have rank $n$ without much loss in the parameters.

For any full rank sublattice $L$ of $A_{n-1}$, the $\R$-linear span of $L$ is
\begin{eqnarray*}
  V=\{(x_1,x_2,\cdots,x_{n-1},x_n)\in\R^n:x_1+x_2+\cdots+x_{n}=0\}.
\end{eqnarray*} We add one extra row vector $\e_n=(0,0,0,0,\cdots,0,\chi)$ to the generator matrix of $L$, where $\chi\in\Z\setminus\{0\}$. The resulting matrix generates a rank $n$ lattice in $\R^n$, which is denoted by $B$ and called the augmented lattice of $L$.

The distance from the point $(0,0,0,0,\cdots,0,\chi)$ to the hyperplane $V$ is $\dfrac{\chi}{\sqrt{n}}$. Thus the minimum Euclidean distance of $B$ satisfies
\begin{eqnarray*}
  d_E(B)\geqslant \min\left\{d_E(L),\dfrac{\chi}{\sqrt{n}}\right\}.
\end{eqnarray*}

\subsection{Concatenation with Principal Lattices}

Now set $S=X(k)$, and use the same estimation $\deg f\geqslant \dfrac{|X(k)|}{q+1}$ as \cite{Multiplicative1990}. Thus the minimum Euclidean distance of $L_S=L_{X(k)}$ satisfies $d_E(L_{X(k)})\geqslant\sqrt{\dfrac{2n}{q+1}}$, where $n=|X(k)|$.

We add the row vector $(0,0,0,0,\cdots,0,n)$ to the generator matrix of $L_{X(k)}$ and obtain a rank $n$ lattice $B_{X(k)}$ in $\Z^n$. The parameters of $B_{X(k)}$ are

\begin{proposition}\label{prop:augmented principal lattice}
  \begin{enumerate}[(i)]
\item  $\rank B_{X(k)}=n$;
\item  $\det B_{X(k)}=\dfrac{n}{\sqrt{n}}\cdot\det(L_{X(k)})\leqslant n\cdot |J_X(k)|$;
\item  $d_E(B_{X(k)})\geqslant \min\left\{d_E(L_{X(k)}),\dfrac{n}{\sqrt{n}}\right\}\geqslant \sqrt{\dfrac{2n}{q+1}} $.
\end{enumerate}
\end{proposition}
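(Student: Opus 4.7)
The plan is to derive all three parts from the structural setup of Subsection~\ref{Dimension augment} combined with Lemma~\ref{parameter of principle lattice}. For part~(i), I would simply observe that $L_{X(k)}$ has rank $n-1$ by Lemma~\ref{parameter of principle lattice}(i), and its $\R$-span is exactly the hyperplane $V=\{\mathbf{x}\in\R^n:\sum_i x_i=0\}$. The appended row $\e_n=(0,\dots,0,n)$ has coordinate sum $n\neq 0$, so it lies outside $V$ and is linearly independent from any basis of $L_{X(k)}$. This immediately yields $\rank B_{X(k)}=n$.

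For part~(ii), my approach is the standard ``base times height'' formula for lattice determinants: when a full-rank lattice in $\R^n$ is obtained from a codimension-one sublattice $L\subseteq V$ by adjoining one extra generator $\mathbf{w}$, its determinant equals $\det L$ times the distance from $\mathbf{w}$ to $V$. The unit normal to $V$ is $(1,\dots,1)/\sqrt{n}$, so the distance from $\e_n$ to $V$ is $|\langle\e_n,(1,\dots,1)/\sqrt{n}\rangle|=n/\sqrt{n}$. Hence
\begin{equation*}
\det B_{X(k)}=\frac{n}{\sqrt{n}}\cdot\det L_{X(k)},
\end{equation*}
and plugging in the bound $\det L_{X(k)}\leqslant \sqrt{n}\cdot|J_X(k)|$ from Lemma~\ref{parameter of principle lattice}(ii) produces $\det B_{X(k)}\leqslant n\cdot|J_X(k)|$.

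For part~(iii), any vector in $B_{X(k)}$ takes the form $\mathbf{x}+a_n\e_n$ with $\mathbf{x}\in L_{X(k)}\subseteq V$ and $a_n\in\Z$. If $a_n=0$ its Euclidean norm is at least $d_E(L_{X(k)})$; if $a_n\neq 0$, its orthogonal projection onto $V^{\perp}$ already has magnitude $|a_n|\sqrt{n}\geqslant n/\sqrt{n}$, which lower-bounds the total length. Combining the two cases reproves the inequality already recorded in Subsection~\ref{Dimension augment}. Finally I apply Lemma~\ref{parameter of principle lattice}(iii) together with the estimate $\deg f\geqslant |X(k)|/(q+1)$ borrowed from \cite{Multiplicative1990} to deduce $d_E(L_{X(k)})\geqslant\sqrt{2n/(q+1)}$; since $\sqrt{n}\geqslant\sqrt{2n/(q+1)}$ for every prime power $q$, the minimum equals the principal-lattice bound. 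No step looks genuinely delicate: the only place demanding modest care is (ii), where one must invoke the volumetric interpretation of $\det$ correctly, but since the orthogonal decomposition of $\e_n$ with respect to $V$ is transparent, this reduces to a one-line computation.
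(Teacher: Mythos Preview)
Your proposal is correct and follows essentially the same approach as the paper: (i) and (iii) are deduced from the dimension-augmentation setup of Subsection~\ref{Dimension augment}, and (ii) is obtained via the volumetric (base-times-height) interpretation of $\det$ together with the distance $n/\sqrt{n}$ from $\e_n$ to the hyperplane $V$, followed by Lemma~\ref{parameter of principle lattice}(ii). You simply spell out in more detail what the paper leaves implicit.
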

\begin{proof}
  (i)(iii) are directly from the dimension augmentation method. For (ii), as the determinant of a lattice is just the volume of the fundamental region of the lattice, and the distance from the point $(0,\cdots,0,n)$ to the $\R$-linear span of $L_{X(k)}$ is $\dfrac {n}{\sqrt{n}}$, we get the determinant of $B_{X(k)}$ is $\dfrac {n}{\sqrt{n}}\cdot \det(L_{X(k)})$. Following lemma \ref{parameter of principle lattice}, we get the result.
\end{proof}

We employ the same families of curves as \cite{Multiplicative1990}:
 For $q$ is an even power of a prime, there exist families of curves $X/k$ of growing genus $g(X)$ such that $\lim \dfrac{|X(k)|}{g(X)}=\sqrt{q}-1$. Moreover, such families satisfy
\[|J_X(k)|\sim q^{g(X)}\left(\frac{q}{q-1}\right)^{|X(k)|}.\]
The proof of the estimation can be found in the Appendix of \cite{Multiplicative1990}. The following lemma characterizes that the corresponding augmented principal lattices lead to asymptotically good packing families.

\begin{lemma}\label{lem:original principal}
A family of curves $X/k$ with $\lim\dfrac{|X(k)|}{g(X)}=\sqrt{q}-1$ yields
a family of augmented principal lattices $\FF_0=\left\{B_{X(k)}^{(N)}\subseteq \R^N\right\}$ with rank $N=|X(k)|\rightarrow \infty$ and
\[\lambda(\FF_0)\geqslant \log {\sqrt{\pi e}}-\log{\dfrac{\sqrt{q+1}}{q-1}}-\dfrac{\sqrt{q}}{\sqrt{q}-1}\log{q}.\]

\end{lemma}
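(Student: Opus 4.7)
The plan is a direct computation: substitute the three parameters from Proposition~\ref{prop:augmented principal lattice} into the explicit lattice density formula and pass to the limit as $n=|X(k)|\to\infty$. Since each $B_{X(k)}^{(N)}$ is full rank $N=n$ in $\R^n$, the identity $\Delta(L)=(d_E(L)/2)^N V_N/\det(L)$ gives
\[
\lambda\bigl(B_{X(k)}^{(N)}\bigr)=\log_2\!\bigl(d_E(B_{X(k)})/2\bigr)+\tfrac{1}{N}\log_2 V_N-\tfrac{1}{N}\log_2\det(B_{X(k)}),
\]
and inserting $d_E\geqslant\sqrt{2n/(q+1)}$ together with $\det\leqslant n\cdot|J_X(k)|$ from Proposition~\ref{prop:augmented principal lattice} produces a lower bound that I will analyze term by term.

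For the volume and the distance contributions, I invoke the Stirling estimate recalled in Section~\ref{preliminaries}: as $n\to\infty$,
\[
\tfrac{1}{n}\log_2 V_n = -\tfrac{1}{2}\log_2 n + \tfrac{1}{2}\log_2(2\pi e) + o(1).
\]
The $\tfrac12\log_2 n$ appearing in $\log_2(d_E/2)=\tfrac12\log_2 n-\tfrac12\log_2(2(q+1))$ cancels the $-\tfrac12\log_2 n$ coming from $V_n$, leaving the constant $\tfrac12\log_2\bigl(\pi e/(q+1)\bigr)=\log_2\sqrt{\pi e}-\tfrac12\log_2(q+1)$.

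For the determinant term I use the estimate $|J_X(k)|\sim q^{g(X)}\bigl(q/(q-1)\bigr)^{|X(k)|}$ cited from the appendix of \cite{Multiplicative1990}, which yields
\[
\tfrac{1}{n}\log_2\det(B_{X(k)})\;\longrightarrow\;\tfrac{g(X)}{|X(k)|}\log_2 q+\log_2\tfrac{q}{q-1},
\]
since $\tfrac{1}{n}\log_2 n\to0$. The hypothesis $|X(k)|/g(X)\to\sqrt{q}-1$ then turns this limit into $\tfrac{\log_2 q}{\sqrt{q}-1}+\log_2\tfrac{q}{q-1}$.

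The only remaining work is algebraic bookkeeping: rewrite
\[
\tfrac{\log_2 q}{\sqrt{q}-1}+\log_2\tfrac{q}{q-1}=\tfrac{\sqrt{q}}{\sqrt{q}-1}\log_2 q-\log_2(q-1),
\]
and pair $-\log_2(q-1)$ with $-\tfrac12\log_2(q+1)$ into $-\log_2\bigl(\sqrt{q+1}/(q-1)\bigr)$. Assembling the three contributions gives exactly the bound in the statement. The main obstacle is psychological rather than mathematical: making sure the cancellations between the $\tfrac12\log_2 n$ terms are correctly tracked and that the final rearrangement produces the precise form $\log_2\sqrt{\pi e}-\log_2\bigl(\sqrt{q+1}/(q-1)\bigr)-\tfrac{\sqrt{q}}{\sqrt{q}-1}\log_2 q$; the rest is a direct substitution of Proposition~\ref{prop:augmented principal lattice} together with the Stirling and Jacobian asymptotics.
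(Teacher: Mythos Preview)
Your proposal is correct and follows exactly the approach the paper indicates: the paper's own proof is a two-line pointer (``straightforward from the definition of asymptotic density exponent and Proposition~\ref{prop:augmented principal lattice}'', plus the remark that $\tfrac{1}{N}\log_2 N\to 0$), and you have carried out precisely that computation in full detail, including the Stirling asymptotic for $V_N$, the Jacobian estimate from \cite{Multiplicative1990}, and the final algebraic regrouping.
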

\begin{proof}
   Note that $\lim_{N\rightarrow \infty}\dfrac 1N\log_2 N=0$. The proof is straightforward from the definition of asymptotic density exponent and Proposition \ref{prop:augmented principal lattice}. It is also similar to the proof of {\cite[Theorem 1.2]{Multiplicative1990}}.
\end{proof}

Note that the bound in Lemma \ref{lem:original principal} is exactly the one of principal lattices \cite[Theorem 1.2]{Multiplicative1990}. This means the dimension augmentation do not harm the good asymptotic properties of the original lattices. Meanwhile, we put it here as a reference to compare with the following Proposition \ref{prop:concatenation with principal}. The difference is the advantage of our concatenating method.

As $d_E\left(B_{X(k)}^{(N)}\right)\geqslant \sqrt{\dfrac{2}{q+1}}\cdot \sqrt{N}$, we can proceed with the method introduced in Section \ref{sec:Our concatenation}. We denote $\LP^{(N)}_{X(k)}=B_{X(k)}^{(N)}+\omega B_{X(k)}^{(N)}$ and get the following proposition.

\begin{proposition}\label{prop:concatenation with principal}
A family of curves $X/k$ with $\lim\dfrac{|X(k)|}{g(X)}=\sqrt{q}-1$ and families of $Q$-ary codes $$\left\{C_i^{(N)}=\left(N,Q^{NR_i^{(N)}},\left\lceil Q^{\ell-i}\cdot \dfrac{2N}{q+1} \right\rceil\right)\right\}_{i=0}^{\ell-1}$$ with $\ell=\left\lfloor\log_Q\dfrac{(Q-1)(q+1)}{2Q}\right\rfloor$ and the rate
\begin{eqnarray*}
  \lim_{N\rightarrow\infty} R_i^{(N)}\geqslant R_{GV}\left(Q,Q^{\ell-i}\cdot \dfrac{2}{q+1}\right)=1-H_Q\left(Q^{\ell-i}\cdot \dfrac{2}{q+1}\right),
\end{eqnarray*}
yield a packing family
$$\FF_{Q,q}=\left\{C_0^{(N)}+t_{\iP}C_1^{(N)}+\cdots+t_{\iP}^{\ell-1}C_{\ell-1}^{(N)}+t_{\iP}^{\ell}\cdot\LP_{X(k)}^{(N)}\subseteq \R^{2N}\right\}_{N\rightarrow \infty}$$
with $N=|X(k)|\rightarrow \infty$ and
\begin{eqnarray}\label{eqn:improvement of principal lattice}
  \lambda(\FF_{Q,q})&\geqslant&\log {\sqrt{\pi e}}-\log{\dfrac{\sqrt{q+1}}{q-1}}-\dfrac{\sqrt{q}}{\sqrt{q}-1}\log{q}\nonumber\\
  &&\qquad -\frac 14\log_23+\dfrac 12 \log_2 Q \sum_{i=0}^{\ell-1}\left(1-H_Q\left(\dfrac {2Q^{\ell-i}}{q+1}\right)\right).
\end{eqnarray}

\end{proposition}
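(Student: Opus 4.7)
The plan is to apply Proposition \ref{prop:general exponent} directly to the family $\{B_{X(k)}^{(N)}\}$ of augmented principal lattices, with the constant $c$ coming from the Euclidean-distance bound in Proposition \ref{prop:augmented principal lattice}(iii), and then to compute the determinant term by feeding in the known asymptotics for $|J_X(k)|$.

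First, I would check the hypotheses of Proposition \ref{prop:general exponent}. By Proposition \ref{prop:augmented principal lattice}(iii) we have $d_E(B_{X(k)}^{(N)}) \geqslant \sqrt{2N/(q+1)}$, so the family $\{B_{X(k)}^{(N)}\}$ fits the template with $c = \sqrt{2/(q+1)}$, i.e.\ $c^2 = 2/(q+1)$. The stated choice $\ell=\left\lfloor\log_Q\bigl((Q-1)(q+1)/(2Q)\bigr)\right\rfloor$ is exactly $\left\lfloor\log_Q((Q-1)/(c^2 Q))\right\rfloor$, and the stated code parameters $C_i^{(N)}$ and their rate conditions are exactly the ones produced by the GV bound in that construction. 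Hence Proposition \ref{prop:general exponent}, applied to $\LP_{X(k)}^{(N)} = B_{X(k)}^{(N)} + \omega B_{X(k)}^{(N)}$, yields
\begin{equation*}
\lambda(\FF_{Q,q}) \geqslant \tfrac{1}{2}\log_2\tfrac{c^2\pi e}{2\sqrt{3}} - \limsup_{N\to\infty}\tfrac{1}{N}\log_2 \det\bigl(B_{X(k)}^{(N)}\bigr) + \tfrac{1}{2}\log_2 Q\sum_{i=0}^{\ell-1}\bigl(1-H_Q(Q^{\ell-i}c^2)\bigr).
\end{equation*}

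Next, a straightforward substitution $c^2=2/(q+1)$ gives
$\tfrac{1}{2}\log_2\bigl(c^2\pi e/(2\sqrt{3})\bigr) = \log_2\sqrt{\pi e} - \tfrac{1}{2}\log_2(q+1) - \tfrac{1}{4}\log_2 3$, and the entropy sum becomes the sum appearing in \eqref{eqn:improvement of principal lattice}. The remaining work is to show
\[
-\limsup_{N\to\infty}\tfrac{1}{N}\log_2\det\bigl(B_{X(k)}^{(N)}\bigr) \;\geqslant\; \log_2(q-1) - \tfrac{\sqrt{q}}{\sqrt{q}-1}\log_2 q,
\]
which together with the previous display reproduces exactly \eqref{eqn:improvement of principal lattice}.

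For this determinant estimate I would combine Proposition \ref{prop:augmented principal lattice}(ii), $\det B_{X(k)}^{(N)} \leqslant N\cdot |J_X(k)|$, with the asymptotic $|J_X(k)| \sim q^{g(X)}\bigl(q/(q-1)\bigr)^{N}$ recalled from \cite{Multiplicative1990}. Taking logarithms, dividing by $N$, and noting $\log_2 N / N \to 0$, we obtain
\[
\limsup_{N\to\infty}\tfrac{1}{N}\log_2\det\bigl(B_{X(k)}^{(N)}\bigr) \leqslant \lim_{N\to\infty}\tfrac{g(X)}{N}\log_2 q + \log_2\tfrac{q}{q-1}.
\]
Using the curve hypothesis $\lim N/g(X) = \sqrt{q}-1$, the first term converges to $\log_2 q/(\sqrt{q}-1)$, and a short simplification $\log_2 q/(\sqrt{q}-1) + \log_2(q/(q-1)) = \sqrt{q}\log_2 q/(\sqrt{q}-1) - \log_2(q-1)$ produces the desired inequality.

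The bookkeeping is essentially what is already done in Lemma \ref{lem:original principal}, so there is no new conceptual difficulty; the main thing to be careful about is that Proposition \ref{prop:general exponent} was stated for a single $n$ with $d_E(L_n)\geqslant c\sqrt{n}$, whereas here we must apply it along the sequence $N=|X(k)|\to\infty$, so the term $-\tfrac{1}{n}\log_2\det(L_n)$ has to be interpreted as a $\limsup$ (which is visible from the $\limsup$ on the first line of the proof of Proposition \ref{prop:general exponent}). The only mild obstacle is making sure the Stirling error from $\log_2 V_{2N}$, the $\tfrac{1}{N}\log_2 N$ term from Proposition \ref{prop:augmented principal lattice}(ii), and the $\lceil\cdot\rceil$ rounding in the code distance $\lceil Q^{\ell-i}\cdot 2N/(q+1)\rceil$ all vanish in the limit so that they do not contaminate the final bound; this is routine but must be written out explicitly.
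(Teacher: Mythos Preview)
Your proposal is correct and follows exactly the same approach as the paper, which simply cites Proposition~\ref{prop:general exponent} and Proposition~\ref{prop:augmented principal lattice} without further detail. You have spelled out the substitutions and the determinant asymptotic (via $|J_X(k)|\sim q^{g(X)}(q/(q-1))^{N}$ and $N/g(X)\to\sqrt{q}-1$) that the paper leaves implicit, but the logical route is identical.
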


\begin{proof}
  From Proposition \ref{prop:general exponent} and \ref{prop:augmented principal lattice}.
\end{proof}

\begin{remark}\label{remark:result on principal}
  There is no clear monotonicity of the lower bound \eqref{eqn:improvement of principal lattice}. We apply software \Magma{} V2.20-7 \cite{MR1484478,MS} to list all prime numbers within $100$. Let $p_1$ run through the list and choose one splitting prime ideal of $p_1$ as $\iP$. $Q$ is the norm of $\iP$. Let $p_2$ run through the list and let $r$ run through the even numbers from $2$ to $250$. Take $q=p_2^r$. The best output in the experiment is given as

\small{
\begin{verbatim}
[Improvement on Principal Lattices]
 -1.26532182282965944267554218804
 when Q=4 norm of prime ideal lying over 2; q=59^28.
 Lattice packing contributes: -81.2061477310654255659655563902;
 l= 81 Concatenated codes contributes:
                               79.9408259082357661232900142022.
\end{verbatim}
}
\end{remark}

The above output shows that the optimal result in our experiment is $\lambda\geqslant -1.26532182283 $ when $Q=4,q=59^{28}$, which is better than $-1.87$ from principal lattices.  Note that the last two statements show the contributions from augmented principal lattices and concatenated codes respectively to the asymptotic density exponent. In Section \ref{sec:comparison}, we will use the componential contributions to compare our results on concatenations from principal lattices and congruence lattices.

\subsection{Concatenation with Congruence Lattices}

Similarly as last subsection, we set $S=X(k), n=|X(k)|$, and add the row vector $(0,0,0,0,\cdots,0,n)$ to the generator matrix of $L_{S,D}=L_{X(k),D}$ and obtain a rank $n$ lattice $B_{X(k),D}$ in $\Z^n$. The parameters of $B_{X(k),D}$ are
\begin{proposition}\label{prop:augmented congruence lattices}
  \begin{enumerate}[(i)]
\item  $\rank B_{{X(k)},D}=n$;
\item  $\det B_{{X(k)},D}=\dfrac{n}{\sqrt{n}}\det(L_{{X(k)},D})\leqslant n\cdot|J_X(k)|\cdot\dfrac{q^a}{q-1}\cdot\prod(1-q^{-r_i})$;
\item  $d_E(B_{{X(k)},D})\geqslant\min\left\{d_E(L_{{X(k)},D}),\dfrac{n}{\sqrt{n}}\right\}$.
\end{enumerate}
\end{proposition}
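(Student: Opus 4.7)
The plan is to mirror the proof of Proposition \ref{prop:augmented principal lattice} almost verbatim, since $L_{X(k),D}$ is a full rank sublattice of $A_{n-1}$ just like $L_{X(k)}$, and we are applying the same dimension augmentation procedure from Section \ref{Dimension augment}, with the added generator $\e_n=(0,0,\ldots,0,n)$ (so $\chi=n$).

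For parts (i) and (iii) I would invoke the dimension augmentation construction directly. Since $L_{X(k),D}\subseteq A_{n-1}$ lies in the hyperplane $V=\{x_1+\cdots+x_n=0\}$ and the appended vector $(0,\ldots,0,n)$ has nonzero last coordinate, it is linearly independent from the $n-1$ generators of $L_{X(k),D}$, which gives $\rank B_{X(k),D}=n$. For the minimum Euclidean distance, the general bound from Section \ref{Dimension augment}, namely
\[
d_E(B)\geqslant \min\!\left\{d_E(L),\,\tfrac{\chi}{\sqrt n}\right\},
\]
specializes to $d_E(B_{X(k),D})\geqslant \min\{d_E(L_{X(k),D}),\,n/\sqrt n\}$ upon setting $\chi=n$.

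For part (ii), the key geometric fact is that the determinant of a full-rank lattice is the volume of its fundamental parallelepiped. Since the orthogonal distance from the point $(0,\ldots,0,n)$ to the hyperplane $V$ equals $n/\sqrt n$, the fundamental volume of the rank $n$ lattice $B_{X(k),D}$ factors as the fundamental volume of $L_{X(k),D)}$ inside $V$ times this orthogonal height, yielding the identity
\[
\det B_{X(k),D}=\tfrac{n}{\sqrt n}\,\det L_{X(k),D}.
\]
The upper bound then follows immediately by substituting the estimate $\det L_{S,D}\leqslant \sqrt n\cdot|J_X(k)|\cdot\tfrac{q^a}{q-1}\cdot\prod(1-q^{-r_i})$ from the congruence lattice lemma (the second item of the $L_{S,D}$ parameter lemma).

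There is no genuine obstacle here: the argument is essentially the same as for the principal lattice case and the only ingredients used are the congruence lattice parameters from Rosenbloom--Tsfasman together with the elementary volume computation for the augmentation. The only mild care is to note that the dimension augmentation preserves the $A_{n-1}$-based hyperplane geometry, so the factor $n/\sqrt n$ appears in exactly the same way for $B_{X(k),D}$ as it did for $B_{X(k)}$.
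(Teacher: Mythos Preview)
Your proposal is correct and matches the paper's approach: the paper does not even include an explicit proof of this proposition, since it is entirely analogous to Proposition~\ref{prop:augmented principal lattice}, and your plan to mirror that argument---invoking the dimension augmentation from Section~\ref{Dimension augment} for (i) and (iii), the orthogonal-height volume factorization for the equality in (ii), and the congruence lattice parameter lemma for the inequality---is exactly the intended justification.
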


We consider the same families of curves as principal lattices and further choose divisors in such a way that
\begin{eqnarray*}
  \lim \dfrac{\deg D}{|X(k)|}=\dfrac{y}{2\ln q},\ \text{where } 0<y\leqslant 1.
\end{eqnarray*} Note that $\lim \dfrac{\deg D}{|X(k)|}=\dfrac{1}{2\ln q}$ is adopted in \cite{Multiplicative1990}, while here we loosen the requirement for our construction. The following lemma characterizes that the corresponding augmented congruence lattices lead to asymptotically good packing families.

\begin{lemma}
A family of curves $X/k$ with $\lim\dfrac{|X(k)|}{g(X)}=\sqrt{q}-1$ and positive divisors with $\lim \dfrac{\deg D}{|X(k)|}=\dfrac{y}{2\ln q}$ yield
 a family of augmented congruence lattices $\FF_0'=\left\{B_{X(k),D}^{(N)}\subseteq \R^N\right\}$ with rank $N=|X(k)|\rightarrow \infty$ and
\begin{eqnarray*}
  \lambda(\FF_0')&\geqslant& \log_2\sqrt{\frac{\pi}{2}}-\frac 12\log_2\left(\ln q\right)- \dfrac{\sqrt q}{\sqrt q-1}\log_2 q\\
&\ &+\log_2\left(q-1\right)+\frac 12\log_2 y+\frac{1-y}{2}\log_2 e.
\end{eqnarray*}

\end{lemma}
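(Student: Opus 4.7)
The plan is to mimic the proof of Lemma \ref{lem:original principal}, substituting the parameters supplied by Proposition \ref{prop:augmented congruence lattices} into the density expansion
\[
\frac{1}{N}\log_2 \Delta\bigl(B_{X(k),D}^{(N)}\bigr)
 = \log_2\!\bigl(\tfrac{1}{2}d_E(B_{X(k),D}^{(N)})\bigr)
 + \frac{1}{N}\log_2 V_N
 - \frac{1}{N}\log_2 \det B_{X(k),D}^{(N)},
\]
and taking $\limsup$ as $N = |X(k)| \to \infty$ under the hypotheses $|X(k)|/g(X) \to \sqrt{q}-1$ and $a/N = \deg D/|X(k)| \to y/(2\ln q)$.

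I would estimate the three terms separately. Since $a/N \to y/(2\ln q) \leq 1/2$ in the relevant range of $q$, Proposition \ref{prop:augmented congruence lattices}(iii) combined with the bound $d_E(L_{X(k),D}) \geq \sqrt{2a}$ gives $d_E(B_{X(k),D}^{(N)}) \geq \sqrt{2a}$ for $N$ large, whence $\log_2(d_E/2) \geq \tfrac{1}{2}\log_2 a - \tfrac{1}{2}$. The Stirling estimate from Section \ref{preliminaries} yields $\tfrac{1}{N}\log_2 V_N = -\tfrac{1}{2}\log_2 N + \tfrac{1}{2}\log_2(2\pi e) + o(1)$. Finally, Proposition \ref{prop:augmented congruence lattices}(ii), together with the asymptotic $|J_X(k)| \sim q^{g(X)}(q/(q-1))^{|X(k)|}$, the ratio $g(X)/N \to 1/(\sqrt{q}-1)$, and the trivial bound $\prod(1-q^{-r_i}) \leq 1$, gives
\[
\limsup_{N\to\infty} \frac{1}{N}\log_2 \det B_{X(k),D}^{(N)}
 \leq \frac{\sqrt{q}}{\sqrt{q}-1}\log_2 q - \log_2(q-1) + \frac{y}{2\ln q}\log_2 q.
\]

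Summing the three contributions, the $\pm\tfrac{1}{2}\log_2 N$ summands cancel and $\tfrac{1}{2}\log_2 a - \tfrac{1}{2}\log_2 N$ tends to $\tfrac{1}{2}\log_2 y - \tfrac{1}{2} - \tfrac{1}{2}\log_2 \ln q$, while the identity $\log_2 q/\ln q = \log_2 e$ rewrites $\tfrac{y}{2\ln q}\log_2 q$ as $\tfrac{y}{2}\log_2 e$. A short rearrangement based on $\tfrac{1}{2}\log_2(2\pi e) - 1 = \log_2\sqrt{\pi/2} + \tfrac{1}{2}\log_2 e$ then collects the remaining constants into $\log_2\sqrt{\pi/2}$ and combines the two $\log_2 e$ terms into $\tfrac{1-y}{2}\log_2 e$, delivering exactly the stated inequality. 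The main obstacle is not deep; the only slightly delicate point is the $\log_2/\ln$ interplay driven by the normalisation $\deg D/|X(k)| \to y/(2\ln q)$, which is what produces the $\tfrac{1-y}{2}\log_2 e$ summand and thereby motivates loosening the Rosenbloom--Tsfasman choice $y=1$ for the subsequent concatenation.
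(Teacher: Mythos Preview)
Your proposal is correct and follows essentially the same approach that the paper intends: the paper's own proof is merely the sentence ``Similar to the proof of Lemma~\ref{lem:original principal} and \cite[Theorem~2.3]{Multiplicative1990}'', and your argument is precisely the detailed computation this sentence summarises. The bookkeeping with the $\log_2/\ln$ conversion and the rearrangement producing $\tfrac{1-y}{2}\log_2 e$ are carried out correctly.
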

\begin{proof}
    Similar to the proof of Lemma \ref{lem:original principal} and {\cite[Theorem 2.3]{Multiplicative1990}}.
\end{proof}

As for sufficiently large $N=|X(k)|$,  $d_E\left(B_{X(k),D}^{(N)}\right)\geqslant \sqrt{\dfrac{y}{\ln q}}\cdot \sqrt{N}$, we can proceed with the method introduced in Section \ref{sec:Our concatenation}. We denote $\LP^{(N)}_{X(k),D}=B_{X(k),D}^{(N)}+\omega B_{X(k),D}^{(N)}$ and get the following proposition.

\begin{proposition}\label{prop:concatenation with congruence}
A family of curves $X/k$ with $\lim\dfrac{|X(k)|}{g(X)}=\sqrt{q}-1$, positive divisors with $\lim \dfrac{\deg D}{|X(k)|}=\dfrac{y}{2\ln q}$ and families of $Q$-ary codes $$\left\{C_i^{(N)}=\left(N,Q^{NR_i^{(N)}},\left\lceil Q^{\ell-i}\cdot \dfrac{yN}{\ln q}\right\rceil\right)\right\}_{i=0}^{\ell-1}$$ with $\ell=\left\lfloor \log_Q \dfrac{(Q-1)\ln q}{yQ}\right\rfloor$ and the rate
\begin{eqnarray*}
  \lim_{N\rightarrow\infty} R_i^{(N)}\geqslant R_{GV}\left(Q,Q^{\ell-i}\cdot \dfrac {y}{\ln q}\right)=1-H_Q\left(Q^{\ell-i}\cdot \dfrac {y}{\ln q}\right),
\end{eqnarray*}
yield a packing family
$$\FF_{Q,q,y}=\left\{C_0^{(N)}+t_{\iP}C_1^{(N)}+\cdots+t_{\iP}^{\ell-1}C_{\ell-1}^{(N)}+t_{\iP}^{\ell}\cdot\LP_{X(k),D}^{(N)}\subseteq \R^{2N}\right\}_{N\rightarrow \infty}.$$
with $N=|X(k)|\rightarrow \infty$ and
\begin{eqnarray}\label{eqn:improvement of congruence lattice}
  \lambda(\FF_{Q,q,y})&\geqslant&\log_2\sqrt{\frac{\pi}{2}}-\frac 12\log_2\left(\ln q\right)- \dfrac{\sqrt q}{\sqrt q-1}\log_2 q\nonumber\\
&\ &+\log_2\left(q-1\right)+\frac 12\log_2 y+\frac{1-y}{2}\log_2 e\nonumber\\
  &\ & -\frac 14\log_23+\dfrac 12 \log_2 Q \sum_{i=0}^{\ell-1}\left(1-H_Q\left(\dfrac {yQ^{\ell-i}}{\ln q}\right)\right).
\end{eqnarray}

\end{proposition}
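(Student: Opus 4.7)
The plan is to obtain the bound by specializing Proposition \ref{prop:general exponent} to the asymptotically good family $\{L_N\} = \{B_{X(k),D}^{(N)}\}$ with the constant $c = \sqrt{y/\ln q}$, which is precisely the value for which $d_E(B_{X(k),D}^{(N)}) \geqslant c\sqrt{N}$ in the stated asymptotic regime. The role of $y$ is exactly to control this $c$: since $d_E(L_{X(k),D}) \geqslant \sqrt{2\deg D}$ and $\deg D/N \to y/(2\ln q)$, the augmentation lemma preserves the bound $\sqrt{y/\ln q}\sqrt{N}$ for $N$ sufficiently large, because the additional distance $\chi/\sqrt{n} = \sqrt{N}$ is strictly larger. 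With $c$ fixed, the hypothesis on $\ell$ becomes $\ell = \lfloor \log_Q((Q-1)\ln q/(yQ))\rfloor$ and the relative minimum distance of each $C_i^{(N)}$ converges to $Q^{\ell-i} c^2 = Q^{\ell-i} y/\ln q$, which matches the stated GV-input; the existence of such codes is then guaranteed by \eqref{eqn:GV bound}.

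After this reduction, the main work is to expand the three summands of the Proposition \ref{prop:general exponent} bound. The first summand becomes
\begin{equation*}
\tfrac{1}{2}\log_2 \tfrac{c^2 \pi e}{2\sqrt{3}} = \log_2\sqrt{\pi/2} - \tfrac{1}{2}\log_2(\ln q) + \tfrac{1}{2}\log_2 y + \tfrac{1}{2}\log_2 e - \tfrac{1}{4}\log_2 3,
\end{equation*}
and the third summand is exactly the code sum in \eqref{eqn:improvement of congruence lattice}. The only nontrivial analytic input is the asymptotic estimation of $-\tfrac{1}{N}\log_2 \det(B_{X(k),D}^{(N)})$, which I would handle using Proposition \ref{prop:augmented congruence lattices}(ii): applying the upper bound there and taking $-\tfrac{1}{N}\log_2$ turns products into sums, so I need to control the four asymptotic pieces arising from $N$, $|J_X(k)|$, $q^a/(q-1)$, and $\prod(1-q^{-r_i})$. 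The $N$ and $q-1$ factors give $o(1)$, and the product $\prod(1-q^{-r_i})$ is bounded between $0$ and $1$ and contributes $O(1/N)$ once one notes that its exponent is at most $\deg D = O(N)$ times a bounded factor (this is exactly the estimate used implicitly in \cite{Multiplicative1990}).

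The two substantial contributions come from the Jacobian and from $q^a$. Using the prescribed estimate $|J_X(k)| \sim q^{g(X)}(q/(q-1))^{|X(k)|}$ together with $g(X)/N \to 1/(\sqrt{q}-1)$, I obtain
\begin{equation*}
-\tfrac{1}{N}\log_2|J_X(k)| \longrightarrow -\tfrac{\sqrt{q}}{\sqrt{q}-1}\log_2 q + \log_2(q-1),
\end{equation*}
and using $a/N \to y/(2\ln q)$ together with the base change $\log_2 q = (\ln q)\log_2 e$ gives
\begin{equation*}
-\tfrac{a}{N}\log_2 q \longrightarrow -\tfrac{y}{2}\log_2 e.
\end{equation*}
Adding these to the expansion of the first summand, the $+\tfrac{1}{2}\log_2 e$ from $c^2 \pi e$ combines with $-\tfrac{y}{2}\log_2 e$ to produce the required $\tfrac{1-y}{2}\log_2 e$, while the remaining terms reassemble into the first line of \eqref{eqn:improvement of congruence lattice}.

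The main obstacle I anticipate is the bookkeeping of the constants and the careful verification that the $\limsup$ in the definition of $\lambda$ commutes with the various additive and multiplicative estimates (in particular that the $\lceil \cdot \rceil$ in the code length definition and the $\lfloor \cdot \rfloor$ in $\ell$ contribute nothing asymptotically, and that the contribution of $\prod(1-q^{-r_i})$ is genuinely $o(1)$ for divisors $D$ of the chosen shape). Once this is verified, the proof itself reduces to algebraic recombination of the three displayed contributions, and the result follows by direct invocation of Proposition \ref{prop:general exponent}.
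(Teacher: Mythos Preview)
Your proposal is correct and follows exactly the route the paper takes: the paper's proof is simply ``From Proposition \ref{prop:general exponent} and \ref{prop:augmented congruence lattices}'', and you have carried out precisely that specialization, with the details of the algebraic recombination made explicit. One small simplification: you need not worry about showing the $\prod(1-q^{-r_i})$ factor is $O(1/N)$; since this product lies in $(0,1]$, its contribution $-\tfrac{1}{N}\log_2\prod(1-q^{-r_i})$ is nonnegative and may simply be dropped when establishing the lower bound.
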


\begin{proof}
  From Proposition \ref{prop:general exponent} and \ref{prop:augmented congruence lattices}.
\end{proof}

\begin{remark}\label{remark:result on congruence}
  There is no clear monotonicity of the lower bound \eqref{eqn:improvement of congruence lattice}. We design the computational experiments in \Magma{} V2.20-7 \cite{MR1484478,MS} as follows:
\begin{itemize}
\item List all prime numbers within $60$. Let $p_1$ run through the list and choose one splitting prime ideal of $p_1$ as $\iP$. $Q$ is the norm of $\iP$. Let $p_2$ run through the list and let $r$ run through the even numbers from $2$ to $100$. Take $q=p_2^r$.
\item Set $y$ from $0.1$ to $1$ by $0.01$. Then we find the good result comes when $y=0.1$. Next set $y$ from $0.01$ to $0.2$ by $0.0001$. Then repeatedly increase the decimal places to get $y$ for better results.
    \end{itemize}
    We can not run through all prime numbers and all possible values for $y$. The best output in the experiment is given as
\small{
\begin{verbatim}
[Improvement on Congruence Lattices]
 -1.26532181404273379250349262485
 when Q=4 norm of prime ideal lying over 2;
 q=11^94; y=1/4000000000.
 Lattice packing contributes: -19.2060002184860472925950737917;
 l= 19 Concatenated codes contributes:
                               17.9406784044433135000915811668.
\end{verbatim}
}
\end{remark}
The above output shows that the optimal result in our experiment is $\lambda\geqslant -1.26532181404 $ when $Q=4,q=11^{94},y=2.5\times 10^{-10}$, which is better than $-1.39$ from congruence lattices.
 \section{Comparison}\label{sec:comparison}

 In Rosenbloom
and Tsfasman's construction \cite{Multiplicative1990}, congruence lattices lead to an asymptotically good family with $\lambda\geqslant -1.39$, which is better than $\lambda\geqslant -1.87$ of the packing family from principal lattices. However, through our concatenating method, the family derived from congruence lattice has bound only slightly better than the one from principal lattices, while both of the bounds on $\lambda$ are quite similar with Xing's result~\cite{xing2008dense}. It deserves a comparison here.

First we take the case $Q=4,q=11^{94},y=2.5\times 10^{-10}$ as an example, which leads to the best result in the experiment in Remark \ref{remark:result on congruence}, and compare the concatenations derived from congruence (Proposition \ref{prop:concatenation with congruence}) and principal (Proposition \ref{prop:concatenation with principal}) lattices respectively. Let $\ell$ denote the number of concatenated codes and $c$ denote the coefficient used in the bound $d_E(B)\geqslant c\sqrt{n}$, while $c=\sqrt{\dfrac{2}{q+1}}$ in principal case and $c=\sqrt{\dfrac{y}{\ln q}}$ in congruence case. We disassemble the density exponents by contributions from lattice packing and concatenated codes. The numerical results are listed in Table 1.

 \begin{table}[htbp]
\caption{Componential Contributions to Asymptotic Density Exponent}
\centering
\begin{threeparttable}
\begin{tabular}{|c|c|c|}

\hline
&Based on Principal lattices&Based on  Congruence lattices\\
 \hline

 \multirow{2}{*}{Lattice}&$c=1.60346245499\times 10^{-49}$&$c=1.05315179371\times 10^{-6}$\\
\cline{2-3}
&$-161.44243111595$&$ -19.20600021848$\\
\hline
\multirow{2}{*}{Codes}&$\ell=161$&$\ell=19$\\
\cline{2-3}
&$160.15877344941$&$17.94067840444$\\
\hline
$\lambda\geqslant$ &$-1.28365766654$&$-1.26532181404$
\\
\hline
\end{tabular}

      \end{threeparttable}

\end{table}
From the table, we can find that for same $q$, the density contribution from principal lattices is less than congruence lattices, which is consistent with the result in \cite{Multiplicative1990}. However, the bound $c\sqrt{n}$ on the minimum Euclidean distance of principal lattices are much smaller than congruence lattices, which leads to the benefit that we can concatenate more codes with it. More codes contribute more in the density exponent. As a result, the bounds on $\lambda$ are similar.

Compared with Xing's construction, as introduced in Section \ref{sec:remark on Xing}, we concatenate finitely many codes with asymptotically good packing families, while Xing concatenated approximately infinitely many codes with asymptotically bad packing families. The two constructions are essentially different. Moreover, we also test the sequences $\left\{\log_Q\left\lceil c\sqrt{n}\right\rceil-\left\lceil\log_Q \left\lceil c\sqrt{n}\right\rceil\right\rceil\right\}$, where $c$ equals the values shown in the above table. There are only few $n$'s such that the corresponding value is close to $0.3049$. Thus our constructions are different with Xing's as they do not satisfy the requirement in Xing's construction.

Based on the numerical results in Remark \ref{remark:result of Xing}, \ref{remark:result on principal}, \ref{remark:result on congruence}, our packing family derived from congruence lattices has slightly better density exponent than the one from principal lattices, and the one from Xing's construction.

\section{Conclusion}\label{sec:conclusion}
In this paper, we explicitly construct two asymptotically good packing families. The main technique is to concatenate families of codes attaining GV-bound with multiplicative lattices. Our constructions improve the bounds on the asymptotic density exponent of packing families derived from multiplicative lattices.
Moreover, concatenation method offers a channel to unify the constructions of packing from different disciplines, such as curves over finite fields and coding theory, which are the source materials in present paper. Furthermore, we may generalize the construction based on arbitrary number field instead of only $\Q(\sqrt{-3})$. This is left for future research to enhance the concatenating method.

\section*{Acknowledgements}
The author is sincerely grateful to his supervisors, San Ling and Chaoping Xing, for introducing him to this topic, especially for the invaluable suggestions and comments from Chaoping Xing which make the author's initial idea become mature. The author also thanks the reviewers for their very careful reading.






\begin{thebibliography}{10}

\bibitem{MR1484478}
W.~Bosma, J.J. Cannon, C.~Fieker, and A.~Steel, editors.
\newblock {\em {Handbook of Magma Functions}}.
\newblock Computational Algebra Group, School of Mathematics and Statistics,
  University of Sydney, Available online
  http://magma.maths.usyd.edu.au/magma/handbook/, v2.20 edition, 2013.

\bibitem{MS}
W.~Bosma, J.J. Cannon, and C.~Playoust.
\newblock {The Magma Algebra System. {I}. The User Language}.
\newblock {\em J. Symbolic Comput.}, 24(3-4):235--265, 1997.
\newblock Computational algebra and number theory (London, 1993).

\bibitem{cassels1997introduction}
J.W.S. Cassels.
\newblock {\em {An Introduction to the Geometry of Numbers}}.
\newblock Springer-Verlag, New York, 1997.

\bibitem{sloane1999sphere}
J.H. Conway and N.J.A. Sloane.
\newblock {\em {Sphere Packings, Lattices and Groups}}.
\newblock Springer-Verlag, New York, 3rd edition, 1999.

\bibitem{rush2004spherepacking}
G.A. Kabatiansky and J.A. Rush.
\newblock {Sphere Packing and Coding Theory}.
\newblock In {\em Handbook of Discrete and Computational Geometry}, pages
  1355--1376. Chapman \& Hall/CRC, 2004.

\bibitem{macwilliams1977theory}
F.J. MacWilliams and N.J.A. Sloane.
\newblock {\em {The Theory of Error-correcting Codes}}.
\newblock North-Holland mathematical library. North-Holland Publishing Company,
  1977.

\bibitem{neukirch1999algebraic}
J.~Neukirch.
\newblock {\em {Algebraic Number Theory}}, volume 322 of {\em Grundlehren der
  mathematischen Wissenschaften : a series of comprehensive studies in
  mathematics}.
\newblock Springer-Verlag Berlin Heidelberg, 1999.

\bibitem{Multiplicative1990}
M.Yu. Rosenbloom and M.A. Tsfasman.
\newblock {Multiplicative Lattices in Global Fields}.
\newblock {\em Inventiones mathematicae}, 101:687--696, 1990.

\bibitem{stichtenoth2009algebraic}
H.~Stichtenoth.
\newblock {\em {Algebraic Function Fields and Codes}}, volume 254 of {\em
  Graduate Texts in Mathematics}.
\newblock Springer, 2009.

\bibitem{tsfasman1991algebraic}
M.A. Tsfasman and S.G. Vl{\u{a}}du\textlhookt.
\newblock {\em {Algebraic-Geometric Codes}}.
\newblock Kluwer Academic Publishers, Norwell, MA, USA, 1991.

\bibitem{xing2008dense}
C.~Xing.
\newblock {Dense Packings from Quadratic Fields and Codes}.
\newblock {\em Journal of Combinatorial Theory, Series A}, 115(6):1021--1035,
  2008.

\bibitem{zong1999sphere}
C.~Zong.
\newblock {\em {Sphere Packings}}.
\newblock Universitext (1979). Springer-Verlag New York, 1999.

\end{thebibliography}
\end{document}